\newtheorem{theorem}{Theorem}[section]
\newtheorem{lemma}[theorem]{Lemma}
\newtheorem{proposition}[theorem]{Proposition}
\newtheorem{corollary}[theorem]{Corollary}
\theoremstyle{definition}
\newtheorem{definition}[theorem]{Definition}
\theoremstyle{remark}
\newtheorem*{notation}{Notation}
\newcommand{\bbN}{\mathbbm{N}}
\newcommand{\bbP}{\mathbbm{P}}
\newcommand{\bbid}{\mathbbm{1}}
\newcommand{\bbk}{\mathbbm{k}}
\newcommand{\calA}{{\mathcal{A}}}
\newcommand{\calF}{{\mathcal{F}}}
\newcommand{\calS}{{\mathcal{S}}}
\newcommand{\frakm}{{\mathfrak{m}}}
\newcommand{\frakq}{{\mathfrak{q}}}
\newcommand{\rank}{\operatorname{rank}}
\newcommand{\depth}{\operatorname{depth}}
\newcommand{\Tor}{\operatorname{Tor}}
\newcommand{\Hilb}{\operatorname{Hilb}}
\newcommand{\init}{\operatorname{in}}
\newcommand{\del}{{\partial}}
\def\block(#1,#2)#3{\multicolumn{#2}{c}{\multirow{#1}{*}{$ #3 $}}}
\begin{document} 

\title{Toward Free Resolutions over Scrolls}

\author{Laura Felicia Matusevich}
\address{Mathematics Department\\Texas A\&M University\\College Station, TX 77843}
\email{laura@math.tamu.edu}
\author{Aleksandra Sobieska}
\email{ola@math.tamu.edu}

\thanks{The authors were partially supported by NSF grant DMS--1500832.}

\date{}

\begin{abstract}
Let $R=\bbk[x]/I$ where $I$ is the defining ideal of a rational normal
$k$-scroll. We compute the Betti numbers of the ground
field $\bbk$ as a module over $R$. For $k=2$, we give the minimal
free resolution of $\bbk$ over $R$. 
\end{abstract}

\parindent 0pt
\parskip .7em

\maketitle 

\section{Introduction} 

Free resolutions are a mainstay in commutative algebra, as they
contain a wealth of information about the object resolved. A free
resolution is an extended presentation: if a module is given by
generators and relations, the resolution records also relations among
the relations, relations among the relations of the relations, and so
on. In the special case of modules over the polynomial ring over a
field, Hilbert's Syzygy Theorem guarantees that this process 
always terminates. Furthermore, there are algorithms to compute
resolutions over polynomial rings, that are implemented in computer
algebra systems. In some special cases where additional structure is
present, such as for monomial ideals in polynomial rings, 
free resolutions can be given combinatorially. Free
resolutions over polynomial rings have been the focus of intense study;
over more general rings however, free resolutions are typically infinite, and
are consequently harder to work with.

If $R$ is a standard graded $\bbk$-algebra, where $\bbk$ is a field,
it is important to understand 
the resolution of $\bbk$ as an $R$-module. One reason is that, for any
$R$-module $M$, the rank of the $i$th free module in a \emph{minimal} free
resolution of $M$, called its $i$th \emph{Betti number}, equals
$\dim_\bbk\Tor^R_{i}(\bbk,M)$, which can be 
computed from a free resolution of $\bbk$.
Such a ring $R$ is \emph{Koszul} if $\bbk$ has a \emph{linear}
free resolution over $R$, that is, if the entries of the differentials
in a resolution of $\bbk$ as an $R$-module are linear forms. The
Koszul property has received much attention in combinatorial
settings. An early result~\cite{fquadratic} states that if $R =
\bbk[x_1,\dots,x_n]/I$, where $I$ is generated by monomials of degree
two, then $R$ is Koszul. By a degeneration argument, if $R =
\bbk[x_1,\dots,x_n]/J$ where $J$ has a quadratic initial ideal, then
$R$ is Koszul. For semigroup rings, a characterization of the Koszul
property is an open problem, see~\cite{pifrotr} for a survey of known
results on resolutions over semigroup rings. 

In many cases of rings that are known to be Koszul, the
resolution of the residue field is not explicitly known. For
semigroup rings, we are aware only of resolutions over the
rings associated to rational normal curves~\cite{ghphfotr}. In
fact,~\cite{ghphfotr} gives the minimal free resolution for any
monomial ideal in this case.

In this article, we consider the next class of examples after rational
normal curves, namely rational normal scrolls. We compute the Betti
numbers of the residue field (Theorem~\ref{bettimain}), and for
$2$-scrolls, we give its minimal free resolution. 

We illustrate our results in an example. Consider $R = \bbk[x_1,\dots,x_6]/I$,
where $I$ is the ideal of $2\times 2$ minors of the matrix
\[
\left[
\begin{array}{cc|cc}
x_1 & x_2 & x_4 & x_5 \\ 
x_2 & x_3 & x_5 & x_6
\end{array}
\right].
\]
The ideal $I$ gives the defining equations of the rational normal scroll
$\calS(2,2)$. In this case, the minimal free resolution of $\bbk$ over
$R$ is
\[
\cdots \to R^{64\cdot 3^{i-3}} \xrightarrow{\del_{i}} \cdots
\xrightarrow{\del_4} R^{64} \xrightarrow{\del_3}
R^{21}\xrightarrow{\del_2} R^6 \xrightarrow{ [x_1 \; x_2 \; \cdots \; x_6]} R
\to \bbk \to 0
\]
The matrices giving the differentials $\del_i$ are highly
structured. Throughout this article, we adopt the following notations: 
$\mathbf{0}^{p \times q}$ denotes a zero matrix of size $p\times q$;
where it causes no confusion, zero blocks or entries of a matrix are
indicated by $\mathbf{0}$ or simply left empty; $\bbid_\ell$ is the
$\ell\times \ell$ identity matrix; direct sum of matrices denotes 
concatenation of blocks along the main diagonal (with off-diagonal
blocks equal to zero). 
With these conventions, 
\[
\del_2=
\scriptsize{\left[ 
\begin{array}{c|c|c|c|rrrrr}
\cline{1-1}
\multirow{2}{*}{\: \: \normalsize{$\varphi_0$} \: \:} & \multicolumn{3}{c|}{} &x_4&&&& \\
\cline{2-2}
& \multirow{2}{*}{\: \: \normalsize{$\varphi_0$} \: \:} & \multicolumn{2}{c|}{} &&x_4&&& \\ 
\cline{1-1}
& & \multicolumn{2}{c|}{} &&&x_4&x_5&x_6 \\
\hline
\multicolumn{2}{c|}{} & \multirow{2}{*}{\: \: \normalsize{$\varphi_0$} \: \:} & &-x_1&-x_2&-x_3&& \\
\cline{4-4} 
\multicolumn{2}{c|}{} & & \multirow{2}{*}{\: \: \normalsize{$\varphi_0$} \: \:} & &&&-x_3& \\ 
\cline{3-3}
\multicolumn{3}{c|}{} & & &&&&-x_3 \\
\cline{4-4}
\end{array}
\right]},
\]
where 
$\varphi_0 = \scriptsize{\left[ 
		\begin{array}{rr|rr}
		{x}_{2}&{x}_{3}&{x}_{5}&{x}_{6}\\
		{-{x}_{1}}&{-{x}_{2}}&{-{x}_{4}}&{-{x}_{5}}

		\end{array} \right] }$;

\[
\del_3 = \left[ 
 \begin{array}{c|c|c|c|c|}
 \multirow{2}{*}{ \larger[1]{$\varphi_1^{\oplus 4}$} } & \multicolumn{2}{c|}{ \larger[2]{x_4 \cdot \bbid_8} }& \multicolumn{2}{c}{} \\
 \cline{2-5} 
 \multicolumn{1}{c|}{} & \multicolumn{2}{c|}{} & \multicolumn{2}{c}{ \larger[2]{-x_3 \cdot \bbid_8} } \\
 \hline 
 \multicolumn{1}{c|}{} & \multirow{2}{*}{\: \: \large{$-\varphi_0$} \: \:} & \multicolumn{3}{c}{} \\
 \cline{3-3}
 \multicolumn{1}{c|}{} & \multicolumn{1}{c|}{} & \multirow{2}{*}{\: \: \large{$-\varphi_0$} \: \:} & \multicolumn{2}{c}{} \\
 \cline{2-2} \cline{4-4}
 \multicolumn{1}{c|}{} & \multicolumn{1}{c|}{}& & \multirow{2}{*}{\: \: \large{$-\varphi_0$} \: \:} & \multicolumn{1}{c}{}\\
 \cline{3-3} \cline{5-5}
 \multicolumn{1}{c|}{} & \multicolumn{2}{c|}{} & & \multirow{2}{*}{\: \: \large{$-\varphi_0$} \: \:} \\
 \cline{4-4}
 \multicolumn{1}{c|}{} & \multicolumn{3}{c|}{} & \\
 \cline{5-5}
 \end{array}
\right]
\]
where
$\varphi_1 = \scriptsize{\left[ 
		\begin{array}{rrrr|rrrr|rrrr}
		{x}_{2}&{x}_{3}&{x}_{5}&{x}_{6}&{x}_{4}&0&0&0&0&0&0&0\\
		{-{x}_{1}}&{-{x}_{2}}&{-{x}_{4}}&{-{x}_{5}}&0&{x}_{4}&{x}_{5}&{x}_{6}&0&0&0&0\\
		\hline 
		0&0&0&0&{-{x}_{1}}&{-{x}_{2}}&{-{x}_{3}}&0&{x}_{2}&{x}_{3}&{x}_{5}&{x}_{6}\\
		0&0&0&0&0&0&0&{-{x}_{3}}&{-{x}_{1}}&{-{x}_{2}}&{-{x}_{4}}&{-{x}_{5}}\\
		\end{array} \right] }$; and for $i \geq 4$,
\[
\del_i = \left[
\begin{array}{c|c}
\multirow{1}{*}{ \large $\varphi_{i-2}^{\oplus 4}$ } & 
	\begin{array}{c|c}
	x_4 \cdot \bbid_{8 \cdot 3^{i-3}} & \\ 
	\hline
	& -x_3 \cdot \bbid_{8 \cdot 3^{i-3}}
	\end{array} \\
\hline
 & \multirow{1}{*}{\large $- \varphi_{i-3}^{\oplus 4}$}
\end{array}
\right]
\]
where
\[
\varphi_2 = \left[
\begin{array}{c|cccccccccccc|c}
\multicolumn{1}{c|}{\multirow{4}{*}{\larger{$\varphi_1$}}} & 
& & & &
\multicolumn{8}{c}{\multirow{4}{*}{}} & 
\multicolumn{1}{|c}{\multirow{4}{*}{}} \\ 
& 
&  &  &  & 
& & & & & & & 
& \\ 
& 
x_1 & x_2 & x_4 & x_5 & 
& & & & & & & 
& \\ 
& 
0 & 0 & 0 & 0 & 
& & & & & & & 
& \\ 
\hline 
\multicolumn{1}{c|}{\multirow{4}{*}{}} & 
-x_2 & -x_3 & -x_5 & -x_6 & 
& & & & & & & &
\multicolumn{1}{c}{\multirow{4}{*}{}} \\ 
& 
x_1 & x_2 & x_4 & x_5 & 
-x_2 & -x_3 & -x_5 & -x_6 & 
& & & 
& \\ 
& 
& & & & 
x_1 & x_2 & x_4 & x_5 & 
-x_2 & -x_3 & -x_5 & -x_6 
& \\ 
& 
& & & &
& & & & 
x_1 & x_2 & x_4 & x_5 
& \\
\hline 
\multicolumn{1}{c|}{\multirow{4}{*}{}} & 
\multicolumn{8}{c}{\multirow{4}{*}{}} & 
0 & 0 & 0 & 0 &
\multicolumn{1}{c}{\multirow{4}{*}{\larger{$\varphi_1$}}} \\ 
& 
& & & & & & & & 
-x_2 & -x_3 & -x_5 & -x_6
& \\ 
& 
& & & & & & & & 
& & & 
& \\ 
& 
& & & & & & & & 
& & & 
& \\         
\end{array}
\right] \in R^{12 \times 36}. 
\]
and $\varphi_i = \varphi_{i-1} \bigoplus \varphi_{i-2}^{\oplus 3}
\bigoplus \varphi_{i-1}$ for $i \geq 3$.
\subsection*{Outline}
This article is organized as follows. Section~\ref{sec:Preliminaries}
contains necessary background. In Section~\ref{sec:Betti} we compute
the Betti numbers of rational normal
$k$-scrolls. Section~\ref{sec:Resolution} is devoted to constructing
the minimal resolution of $\bbk$ over a $2$-scroll.

\subsection*{Acknowledgements}
We thank Christine Berkesch and Chris O'Neill for inspiring
conversations while we worked on this project.

\section{Preliminaries}
\label{sec:Preliminaries}

We work in $n = \sum\limits_{i=1}^k m_i$ variables, and denote the
polynomial ring by $S = \bbk[x_{i,j} \, | \, 1 \leq i \leq k, 1 \leq j
\leq m_i ]$. The \emph{rational normal $k$-scroll} $\calS(m_1-1,
m_2-1, \ldots, m_k-1)$ is the 
variety in $\bbP^{n-1}$ defined by the ideal $I_2(M)$ of $2 \times 2$
minors of the $2 \times (n-k)$ matrix  
\begin{equation}
\label{eqn:minorMatrix}
M = \left[ 
\begin{array}{ccc|ccc|c|ccc}
x_{1,1} & \ldots & x_{1,m_1-1} & x_{2,1} & \ldots & x_{2,m_2-1} & \ldots \ldots & x_{k,1} & \ldots & x_{k,m_k-1} \\ 
x_{1,2} & \ldots & x_{1, m_1} & x_{2,2} & \ldots & x_{2,m_2} & \ldots \ldots & x_{k,2} & \ldots & x_{k,m_k} 
\end{array}
\right] .
\end{equation}

Throughout this article, we often forego writing ``rational normal" and call $\calS(m_1-1,
m_2-1, \ldots, m_k-1)$ a $k$-scroll and $\calS(m-1, n-m-1)$ a scroll. 

When $k=1$, $\calS(n-1)$ is a \emph{rational normal curve}, that is, the
variety defined by $2\times 2$ the minors of the matrix  
\[ 
\left[
\begin{array}{cccc}
x_1 & x_2 & \ldots & x_{n-1} \\ 
x_2 & x_3 & \ldots & x_n
\end{array}
\right] .
\] 
%

\subsection{Koszul algebras}

Let $A = \bigoplus_{i \geq 0} A_i$ be a standard graded
$\bbk$-algebra, and let $\beta_i^A(\bbk)$ be the $i$th Betti number of
$\bbk$ as an $A$-module. We consider the \emph{Poincar\'e series}
$P_A(t)$ of $A$, and its \emph{Hilbert series} $\Hilb(A;t)$, defined
as follows
\[
P_A(t) =
\sum\limits_{i\geq 0} \beta_i^A(\bbk) \cdot t^i
\quad \text{and} \quad
\Hilb(A;t) = \sum\limits_{i \geq 0}
\dim_\bbk A_i \cdot t^i.
\]
When $A$ is a \emph{Koszul} ring, there is a strong
relationship between these two series, that is useful later on. The
following result can be taken as a definition.

\begin{theorem}{(cf. \cite[Definition-Theorem~1]{fkoszulalgs})}\label{koszulalgs}
	A graded algebra $A$ is Koszul if and only if the following equivalent conditions are satisfied:
	\begin{enumerate}
		\item the minimal graded $A$-resolution of $\bbk$ is linear.
		\item $\Hilb(A;-t)P_A(t)=1$.
	\end{enumerate} 
\end{theorem}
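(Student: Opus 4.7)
The plan is to prove $(1) \Leftrightarrow (2)$ by exploiting the Euler characteristic identity that comes from a minimal graded free resolution of $\bbk$. Concretely, if $F_\bullet \to \bbk$ is such a resolution with $F_i = \bigoplus_j A(-j)^{\beta_{i,j}}$, then additivity of Hilbert series on short exact sequences yields the key identity
\[
\Hilb(A;t) \cdot \sum_{i,j}(-1)^i \beta_{i,j}\, t^j = 1,
\]
which I shall call the \emph{Euler identity}. Both directions follow from manipulating this one formula.

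For $(1) \Rightarrow (2)$: linearity forces $\beta_{i,j} = 0$ unless $j = i$, so the inner sum in the Euler identity collapses to $\sum_i (-1)^i \beta_i^A(\bbk)\, t^i = P_A(-t)$. Hence $\Hilb(A;t)\, P_A(-t) = 1$, and substituting $t \mapsto -t$ is exactly condition $(2)$.

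For $(2) \Rightarrow (1)$: I substitute $t \mapsto -t$ in the Euler identity to obtain $\Hilb(A;-t) \cdot \sum_{i,j}(-1)^{i+j}\beta_{i,j}\, t^j = 1$, and compare with $(2)$, namely $\Hilb(A;-t)\, P_A(t) = 1$, to obtain the equality of formal power series
\[
\sum_i \beta_i^A(\bbk)\, t^i \;=\; \sum_{i,j}(-1)^{i+j}\beta_{i,j}\, t^j.
\]
Matching the coefficient of $t^n$ and using the minimality fact that $\beta_{i,j} = 0$ whenever $j < i$ (valid in any minimal graded resolution over a standard graded $\bbk$-algebra, since the differentials have entries in the irrelevant maximal ideal), the diagonal terms $\beta_{n,n}$ cancel and one is left with the recursion
\[
\sum_{j > n} \beta_{n,j} \;=\; \sum_{i < n} (-1)^{i+n}\, \beta_{i,n}.
\]
Induction on $n$ now finishes the argument: for $n = 0$ the right side is an empty sum, so $\beta_{0,j} = 0$ for all $j > 0$; assuming inductively that $\beta_{i,j} = 0$ whenever $j > i$ and $i < n$, the right side vanishes at stage $n$, forcing $\beta_{n,j} = 0$ for $j > n$. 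This is precisely linearity of the resolution.

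I do not anticipate a substantial obstacle: the proof is essentially formal once the Euler identity is in hand. The most delicate bookkeeping point is the minimality inequality $\beta_{i,j} = 0$ for $j < i$, which must be invoked explicitly at each inductive step and is the one place where standard gradedness of $A$ is essential; without it, the alternating cancellation that isolates the off-diagonal Betti numbers would not go through.
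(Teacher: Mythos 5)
The paper does not actually prove Theorem~\ref{koszulalgs}; it cites it as \cite[Definition-Theorem~1]{fkoszulalgs} and remarks that ``the following result can be taken as a definition,'' so there is no in-paper argument for you to match against. Your blind proof is nonetheless correct and is the standard derivation of the equivalence $(1)\Leftrightarrow(2)$.

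A few remarks on the content. The Euler identity
\[
\Hilb(A;t)\cdot\sum_{i,j}(-1)^i\beta_{i,j}t^j=1
\]
is legitimate even though the resolution is infinite: in any fixed internal degree $d$ only finitely many $F_i$ contribute, precisely because the minimality fact $\beta_{i,j}=0$ for $j<i$ forces $\dim_\bbk(F_i)_d=0$ once $i>d$. You correctly flag this same inequality as the crux of the $(2)\Rightarrow(1)$ direction. The direction $(1)\Rightarrow(2)$ is immediate, as you say: linearity makes $\chi(t)=\sum_{i,j}(-1)^i\beta_{i,j}t^j$ collapse to $P_A(-t)$. For $(2)\Rightarrow(1)$, your cancellation of $\Hilb(A;-t)$ to get $P_A(t)=\chi(-t)$ is valid since $\Hilb(A;-t)$ has constant term $1$ and is therefore a unit (equivalently, a nonzero element of the domain $\bbk[[t]]$), and the inductive argument using nonnegativity of Betti numbers is airtight. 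One cosmetic point: since $\bbk$ is cyclic and generated in degree $0$, the base case $\beta_{0,j}=0$ for $j>0$ is immediate from $F_0=A$ even without invoking the recursion, so the induction really only does work starting at $n=1$. Overall this is exactly the textbook argument (as in Fr\"oberg's survey or Polishchuk--Positselski), and there is nothing in the paper to compare it to, since the authors take the statement as given.
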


As we mentioned in the introduction, the rings that are studied in this article are Koszul.

\begin{theorem}
For $M$ as in~\eqref{eqn:minorMatrix}, $R = S/I_2(M)$ is a Koszul ring.
\end{theorem}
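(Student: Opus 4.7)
The plan is to execute the degeneration argument sketched in the introduction: exhibit a term order on $S$ for which $\init(I_2(M))$ is generated by squarefree quadratic monomials, invoke Fr\"oberg's theorem to conclude that $S/\init(I_2(M))$ is Koszul, and lift Koszulness through the resulting flat deformation from $\init(I_2(M))$ to $I_2(M)$.

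First I would fix a lexicographic order on $S$---for instance ordering the variables so that $x_{i,j} > x_{i',j'}$ whenever $i < i'$, or $i = i'$ and $j < j'$---and record the leading term of each $2 \times 2$ minor $x_{i,a} x_{j,b+1} - x_{i,a+1} x_{j,b}$ of $M$, where $(i,a)$ and $(j,b)$ index two columns of $M$ with $(i,a)$ preceding $(j,b)$. A direct check shows that under this order every such leading term is the squarefree quadratic monomial $x_{i,a} x_{j,b+1}$, so the task becomes showing that these minors actually form a Gr\"obner basis. By Buchberger's criterion this reduces to verifying that each $S$-polynomial between two such minors reduces to zero modulo the full collection of minors. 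For a truly generic $2 \times (n-k)$ matrix this is the classical result that the $2 \times 2$ minors are a Gr\"obner basis under the diagonal term order; the scroll matrix $M$ is more subtle because the variable $x_{i,j}$ (with $1 < j < m_i$) appears in two consecutive columns of the $i$th block. I would carry out a case analysis on how the two indexing column pairs sit relative to each other---within the same block, in different blocks, sharing a repeated variable or not---and in each case use Pl\"ucker-type identities among the $2 \times 2$ minors of $M$ to exhibit a standard representation of the $S$-polynomial whose leading monomial lies in the ideal generated by the leading terms of the minors.

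Once the quadratic Gr\"obner basis is established, Fr\"oberg's theorem gives that $S/\init(I_2(M))$ is Koszul, and the standard ``Koszul lifts from the initial ideal'' principle transfers Koszulness to $R = S/I_2(M)$: upper-semicontinuity of Betti numbers under Gr\"obner degeneration gives $P_R(t) \preceq P_{S/\init(I_2(M))}(t)$ coefficient-wise, the Hilbert series of $R$ and $S/\init(I_2(M))$ coincide, and the coefficientwise inequality $P_A(t) \Hilb(A;-t) \succeq 1$ (which holds for any standard graded algebra, with equality characterizing Koszulness as in Theorem~\ref{koszulalgs}(2)) then forces $P_R(t) \Hilb(R;-t) = 1$. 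The main obstacle is the Gr\"obner basis verification itself: the repeated entries in $M$ can produce $S$-polynomial cancellations that do not appear in the generic $2 \times (n-k)$ case, so the combinatorial bookkeeping in the case analysis is the step where genuine care is needed.
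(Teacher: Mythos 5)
Your high-level strategy is the same as the paper's: show that $I_2(M)$ has a quadratic Gr\"obner basis and invoke the standard fact that this forces Koszulness. The paper, however, simply cites two references---\cite{kpudivsonrns} for the statement that the $2\times 2$ minors form a Gr\"obner basis with respect to a \emph{reverse} lexicographic order, and \cite{bhvsyzandwalks} for ``quadratic Gr\"obner basis $\Rightarrow$ Koszul''---whereas you propose to verify both pieces from scratch. Two comments on that plan. First, the lexicographic order you fix is exactly the order the paper uses in Theorem~\ref{thm:initialIdeal}, and the paper there \emph{does} prove directly (Lemma~\ref{lemxunotinD}, Proposition~\ref{propxusuccxv}) that the minors form a Gr\"obner basis under it, so your choice works; but the paper's argument exploits the toric structure ($\calA$-homogeneity plus the fact that a toric ideal contains no monomials) to avoid the Buchberger/Pl\"ucker bookkeeping altogether, which is considerably cleaner than the $S$-polynomial case analysis you sketch. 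Second, your transfer-of-Koszulness step has a small logical gap: the coefficientwise inequalities $P_R(t) \preceq P_{S/\init}(t)$ and $P_A(t)\Hilb(A;-t) \succeq 1$ cannot simply be multiplied against $\Hilb(R;-t)$, because $\Hilb(\cdot;-t)$ has alternating signs, so ``$P_R(t)\Hilb(R;-t) = 1$'' does not follow by the argument as written. The correct (and standard) route is to use \emph{graded} upper-semicontinuity $\beta_{ij}^R(\bbk) \leq \beta_{ij}^{S/\init}(\bbk)$ under Gr\"obner degeneration: since $S/\init$ is Koszul we have $\beta_{ij}^{S/\init}(\bbk) = 0$ for $i\neq j$, hence $\beta_{ij}^R(\bbk)=0$ for $i\neq j$, which is the definition of $R$ being Koszul. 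With that repair your argument is sound and reproduces what the cited references accomplish.
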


\begin{proof}
By~\cite[Theorem~2.2]{bhvsyzandwalks}, a sufficient condition for a quotient $\bbk[x_1,
\ldots, x_n]/I$ to be Koszul is the existence of a homogeneous
quadratic Gr{\"o}bner basis for $I$. It follows that $R$ is Koszul, since
the $2\times 2$ minors of $M$ form a Gr{\"o}bner basis for $I_2(M)$ with respect to a
reverse lexicographic ordering (see~\cite[Lemma~2.2]{kpudivsonrns}).
\end{proof}

\subsection{Semigroup Rings}

Let $\calA = \{\gamma_1, \ldots, \gamma_n \} \subseteq \bbN^d$. We also use
$\calA$ to denote the $d \times n$ matrix with columns
$\gamma_1,\dots,\gamma_n$. We assume that $d \leq n$ and $\rank \calA = d$. The
configuration (or matrix) $\calA$ induces a map
\begin{align*}
\bbk[x_1, \ldots, x_n] & \rightarrow \bbk[t_1, \ldots, t_d] \\ 
x_i & \mapsto \mathbf{t}^{\gamma_i} = t_1^{\gamma_{i,1}} \cdots t_d^{\gamma_{i,d}} 
\end{align*}
The kernel $I_\calA = \langle \mathbf{x}^u - \mathbf{x}^v \, | \, \calA u =
\calA v \rangle$ of this map is a prime binomial ideal called the
\emph{toric ideal} associated to $\calA$. 
The \emph{semigroup ring} associated to $\calA$ is 
\[
\bbk[\mathbf{t}^{\gamma_1}, \mathbf{t}^{\gamma_2}, \ldots, \mathbf{t}^{\gamma_n}] \cong \bbk[x_1, \ldots, x_n] / I_{\calA}.
\]

By~\cite[Lemma~2.1]{pgbovomd}, $I_2(M) =I_\calA$, where $\calA$ is
the $(k+1) \times n$ matrix  
\begin{equation}
\label{eqn:A} 
\calA = \left[ \begin{array}{cccc|cccc|c|cccc}
1 & \cdots & \cdots & 1 & 0 & \cdots & \cdots & 0 & 0 \: \cdots \cdots\: 0 & 0 & \cdots & \cdots & 0 \\ 
0 & \cdots & \cdots & 0 & 1 & \cdots & \cdots & 1 & 0 \: \cdots \cdots\: 0 & 0 & \cdots & \cdots & 0 \\ 
\vdots & & & \vdots & & & & & & \vdots & & & \vdots \\ 
0 & \cdots & \cdots & 0 & 0 & \cdots & \cdots & 0 & 0 \: \cdots \cdots\: 0 & 1 & \cdots & \cdots & 1 \\ 
0 & 1 & \cdots & m_1-1 & 0 & 1 & \cdots & m_2-1 & \cdots \cdots & 0 & 1 & \cdots & m_k-1
\end{array} \right],
\end{equation}
so that $R=S/I_2(M)$ is a semigroup ring.
%

\section{Betti Numbers of $\bbk$ over $k$-scrolls}
\label{sec:Betti}

Our first main theorem gives the Betti numbers of the field $\bbk$
over $R=S/I_2(M)$, where $M$ is as in~\ref{eqn:minorMatrix}. 

\begin{theorem}\label{bettimain}
	Let $I_2(M)$ define the rational normal $k$-scroll
        $\calS(m_1-1, \ldots, m_k-1)$. If $R = S/I_2(M)$, then the
        $i$th Betti number of $\bbk$ as an $R$-module is
	\[ 
	\beta_i^R(\bbk) = \sum\limits_{j=0}^{i} \binom{k+1}{j}(n-k-1)^{i-j}
	\] 
	In particular $\beta_{k+r}^R(\bbk) = (n-k-1)^{r-1}(n-k)^{k+1}$ for $r \geq 0$. 
\end{theorem}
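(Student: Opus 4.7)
The approach is to exploit that $R$ is Koszul. By Theorem~\ref{koszulalgs}, we have $\Hilb(R;-t)\cdot P_R(t) = 1$, so the whole problem reduces to computing the Hilbert series of $R$ and then extracting coefficients of a single rational function.

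First I would establish
\[
\Hilb(R;t) = \frac{1+(n-k-1)t}{(1-t)^{k+1}}.
\]
This can be read directly off the semigroup description from~\eqref{eqn:A}: since the standard grading matches the sum of the first $k$ coordinates of $\calA$, the degree-$d$ part of $R$ is in bijection with tuples $(d_1,\ldots,d_k,e)\in \bbN^{k+1}$ satisfying $d_1+\cdots+d_k=d$ and $0\leq e\leq \sum_i d_i(m_i-1)$. Counting these, and using $\sum_{d_1+\cdots+d_k=d}d_i = \frac{d}{k}\binom{d+k-1}{k-1}$ by symmetry, gives
\[
\dim_\bbk R_d \;=\; \binom{d+k-1}{k-1}\Bigl(1+\tfrac{(n-k)d}{k}\Bigr),
\]
whose generating series is exactly the claimed rational function. (Alternatively, one may invoke the classical fact that scrolls are arithmetically Cohen-Macaulay of dimension $k+1$, degree $n-k$, and Castelnuovo-Mumford regularity~$1$, which forces the $h$-vector to be $(1,\,n-k-1)$.)

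Next, substituting $-t$ and inverting yields
\[
P_R(t) \;=\; \frac{(1+t)^{k+1}}{1-(n-k-1)t}.
\]
Expanding $(1+t)^{k+1}$ via the binomial theorem and the denominator as a geometric series, the coefficient of $t^i$ is $\sum_{j=0}^{i}\binom{k+1}{j}(n-k-1)^{i-j}$, using that $\binom{k+1}{j}=0$ for $j>k+1$ so the upper limit may be written as $i$ for every $i$. For the closed-form identity when $i = k+r$ with $r\geq 1$, the sum truly runs only from $j=0$ to $j=k+1$; factoring out $(n-k-1)^{r-1}$ reduces the remaining sum to $\sum_{j=0}^{k+1}\binom{k+1}{j}(n-k-1)^{k+1-j} = (n-k)^{k+1}$ by the binomial theorem.

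The only nontrivial input is the Hilbert series; once that is in hand, everything else is a formal power-series manipulation, and the Koszul property provides the crucial bridge (already established earlier in the section). I expect the lattice-point count to be the main obstacle, though it is a routine calculation specific to the combinatorics of scroll semigroups.
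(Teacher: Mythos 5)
Your proposal is correct, and the high-level skeleton matches the paper exactly: both invoke the Koszul property (Theorem~\ref{koszulalgs}) to reduce the Betti number computation to inverting the Hilbert series, and the expansion of $(1+t)^{k+1}/(1-(n-k-1)t)$ at the end is identical. Where you genuinely diverge is in how $\Hilb(R;t)=\frac{1+(n-k-1)t}{(1-t)^{k+1}}$ is obtained. The paper spends the bulk of Section~\ref{sec:Betti} on a Gr\"obner degeneration: it proves that the lexicographic initial ideal of $I_2(M)$ is squarefree (Theorem~\ref{thm:initialIdeal}), identifies the associated Stanley--Reisner complex $\Delta$, counts its faces by induction on $k$ (Proposition~\ref{prop:faceNumbers}), and reads the Hilbert series off the $f$-vector (Proposition~\ref{prophilbertseries}). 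You bypass all of this by counting lattice points directly in the scroll semigroup, using that the graded pieces of $R$ are in bijection with tuples $(d_1,\ldots,d_k,e)$ with $\sum d_i=d$ and $0\le e\le\sum d_i(m_i-1)$; your computation of $\dim_\bbk R_d=\binom{d+k-1}{k-1}\bigl(1+\tfrac{(n-k)d}{k}\bigr)$ and the resulting generating function are correct. Your alternative route via degree, dimension, and regularity of minimal-degree varieties is also valid. The trade-off: your argument is shorter and more self-contained for this particular theorem, but the paper's squarefree-initial-ideal result and the simplicial complex $\Delta$ have independent interest and are arguably the more structural statement about scrolls; they are not mere stepping stones to the Hilbert series. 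One small point worth noting: you correctly restrict the closed-form simplification $\beta_{k+r}^R(\bbk)=(n-k-1)^{r-1}(n-k)^{k+1}$ to $r\ge 1$; the theorem statement says $r\ge 0$, but at $r=0$ the right-hand side is not even an integer in general (e.g., for $\calS(2,2)$ one gets $64/3$ rather than the correct $\beta_2=21$), so the paper's stated range is a minor slip that your proof implicitly corrects.
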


Because $R$ is Koszul, Theorem~\ref{koszulalgs} implies that we can
obtain the Poincar{\'e} series of $R$ by inverting its Hilbert
series. Since Hilbert series are preserved under Gr\"obner
degeneration, it is enough to compute the Hilbert series of
$S/\init_\prec(I_2(M))$ for $\prec$ a monomial order in $S$. This
task is easiest if we are fortunate enough that our ideal has a squarefree
initial ideal. The next result states that this is indeed the case for scrolls.

\begin{theorem}
\label{thm:initialIdeal}
Let $\prec$ be the lexicographic monomial order on $S$ given by $x_{1,1}
\succ x_{1,2} \succ \ldots \succ x_{1,m_1} \succ x_{2,1} \succ \ldots
\succ x_{k,m_k}$, then
\begin{equation}
\label{eqn:initialIdeal}
\init_\prec(I_2(M)) = \langle x_{i,j}x_{i,\ell} \mid |j-\ell| \geq 2 \rangle +  \langle x_{i,j}x_{r,s} \mid 1 \leq i < r \leq k, 1 \leq j < m_i, 1 < s \leq m_r \rangle,
\end{equation}
that is, $\init_\prec(I_2(M))$ is generated by the products of
variables on the main diagonals of $M$. In particular,
$\init_\prec(I_2(M))$ is a squarefree monomial ideal.
\end{theorem}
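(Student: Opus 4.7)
The plan is to verify that the $2\times 2$ minors of $M$ form a Gröbner basis of $I_2(M)$ with respect to the specified lex order; the theorem then follows by reading off the leading terms. Index the columns of $M$ by pairs $(i,j)$ with $1 \le i \le k$ and $1 \le j \le m_i - 1$, ordered lexicographically in the indices. The $2\times 2$ minor determined by columns $(i,j) < (r,s)$ is $f_{(i,j),(r,s)} = x_{i,j}x_{r,s+1} - x_{i,j+1}x_{r,s}$. Using that $x_{i,j} \succ x_{i,j+1}$ and, when $i < r$, $x_{i,j} \succ x_{r,s}$, a short case check shows the leading term of $f_{(i,j),(r,s)}$ is $x_{i,j}x_{r,s+1}$. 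As $(i,j)$ and $(r,s)$ range over all ordered pairs of columns, these leading monomials yield exactly the two families of generators on the right-hand side of~\eqref{eqn:initialIdeal}: the case $i = r$ produces the within-block generators $x_{i,j}x_{i,\ell}$ with $|j - \ell| \ge 2$, and the case $i < r$ produces the cross-block generators $x_{i,j}x_{r,s}$ with $1 \le j < m_i$ and $1 < s \le m_r$.

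Having identified the leading monomials, it remains to show they generate all of $\init_\prec(I_2(M))$. Two natural routes present themselves. The first is Buchberger's criterion: since each leading monomial is a squarefree product of two distinct variables, a nonzero $S$-pair between two minors arises from one of four kinds of index collision (a shared top-left entry, a shared bottom-right entry, or one of the two mixed identifications). In each case I would expand the $S$-polynomial, recognize it as an $S$-linear combination of other $2 \times 2$ minors, and verify that standard reduction against these minors produces zero. The second route, perhaps more in keeping with this paper's Hilbert-series machinery, is to note that the monomial ideal $J$ on the right of~\eqref{eqn:initialIdeal} is \emph{a priori} contained in $\init_\prec(I_2(M))$, so that coefficient-wise $\Hilb(S/J;t) \ge \Hilb(S/\init_\prec(I_2(M));t) = \Hilb(S/I_2(M);t)$. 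One then computes $\Hilb(S/J;t)$ by enumerating monomials not divisible by any generator of $J$ and matches the result with the known Hilbert series of the rational normal $k$-scroll, forcing equality and hence $J = \init_\prec(I_2(M))$.

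The main obstacle is combinatorial bookkeeping, whichever route is chosen. For Buchberger's approach, the cases in which the columns involved are adjacent within a single block require careful tracking of which minors appear in the reduction, including instances where an extra minor from a nearby column pair must be introduced to cancel middle terms. For the Hilbert-series approach, the enumeration of standard monomials of $S/J$ must simultaneously encode the within-block spacing constraint $|j-\ell|\ge 2$ and the cross-block constraint on $(j,s)$, a finite but nontrivial combinatorial task. Either way, once the monomial description of $\init_\prec(I_2(M))$ is established, the squarefree assertion is immediate from inspection of the generators.
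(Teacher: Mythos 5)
Your identification of the leading monomials of the $2\times 2$ minors is correct, and the easy containment $D \subseteq \init_\prec(I_2(M))$ follows immediately; the real content is the reverse containment, and here your two proposed routes are both viable in principle but differ substantially from the paper's. The paper exploits the fact that $I_2(M)$ is the \emph{toric} ideal $I_\calA$, hence spanned by $\calA$-homogeneous binomials $\mathbf{x}^u - \mathbf{x}^v$. Lemma~\ref{lemxunotinD} gives a precise description of the exponent vectors $u$ with $x^u \notin D$, and Proposition~\ref{propxusuccxv} shows that such a $u$ cannot be the lex-larger of two distinct exponents in the same $\calA$-degree. Since a toric ideal contains no monomials, this immediately forces $\init_\prec P \in D$ for every $\calA$-homogeneous $P \in I_2(M)$, and the containment follows without ever forming an $S$-polynomial or computing a Hilbert series. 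By contrast, your Buchberger route requires the case-by-case $S$-polynomial reductions you flag as bookkeeping, and your Hilbert-series route needs an \emph{independent} source for $\Hilb(R;t)$ — note in particular that the paper's own Hilbert-series computation (Propositions~\ref{prop:faceNumbers} and~\ref{prophilbertseries}) takes $\init_\prec(I_2(M)) = D$ as an input, so relying on that would be circular; you would need the classical degree/Hilbert-function formula for rational normal scrolls instead. The paper's binomial argument is shorter and also directly produces the standard-monomial description (Lemma~\ref{lemxunotinD}) that feeds the subsequent Stanley--Reisner face-count, whereas your routes establish the initial ideal without yielding that combinatorial byproduct.
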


Denote by $D$ the ideal on the right hand side of~\eqref{eqn:initialIdeal}.
To prove Theorem~\ref{thm:initialIdeal}, we begin by pinpointing
which monomials are \emph{not} in $D$. 

\begin{lemma}\label{lemxunotinD} 
	Suppose $x^u\notin D$. 
	\begin{enumerate}[a)]
		\item If there exists $i$ such that $x^u$ contains two variables with first index $i$ with nonzero exponents, then $u$ is of the form 
		\[
		u = (0 \ldots 0 \, a_1 | 0 \ldots 0 \, a_2 | \ldots | 0 \ldots 0 \, a_{i-1} | 0 \ldots 0 \, a_{i,\ell} \, a_{i,\ell+1} \, 0 \ldots 0 | a_{i+1} \, 0 \ldots 0 | \ldots | a_k \, 0 \ldots 0 ) 
		\]
		\item Otherwise, $u$ is of the form 
		\[
		u = (0 \ldots 0 \, a_1 | 0 \ldots 0 \, a_2 | \ldots | 0 \ldots 0 \, a_{i-1} | a_i \, 0 \ldots 0 | a_{i+1} \, 0 \ldots 0 | \ldots | a_k \; 0 \ldots 0)
		\] 
	\end{enumerate}
\end{lemma}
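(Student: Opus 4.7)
The plan is to translate the hypothesis $x^u\notin D$ into explicit combinatorial constraints on the support of $u$, and then read off the two forms directly. Since $D$ is a monomial ideal generated by the families listed in~\eqref{eqn:initialIdeal}, the condition $x^u\notin D$ is equivalent to the combination of: (i) within any fixed block $i$, no two support positions $j,\ell$ satisfy $|j-\ell|\geq 2$; and (ii) for any pair of blocks $i<r$, one cannot simultaneously have a support position $j<m_i$ in block $i$ and a support position $s>1$ in block $r$. Condition (i) already forces the block-$i$ support to lie in a pair of consecutive positions $\{\ell,\ell+1\}$, or in a single position.

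For case (a), I would start by observing that (i) forces the two distinguished positions in block $i$ to be consecutive, say $\ell$ and $\ell+1$, and rules out any further support in block $i$. Since $\ell<m_i$ and $\ell+1>1$, I would then invoke (ii) twice. For each $r>i$, applying (ii) with the support position $\ell<m_i$ in block $i$ shows that block $r$ has no support at any $s>1$, i.e.\ block $r$'s support lies in $\{1\}$. For each $r<i$, applying (ii) with the support position $\ell+1>1$ in block $i$ shows that block $r$ has no support at any $j<m_r$, i.e.\ block $r$'s support lies in $\{m_r\}$. Assembling these block-by-block descriptions gives form (a).

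For case (b), every block carries at most one position in the support. Writing $i_1<\dots<i_s$ for the blocks with support and $c_p$ for the unique support position in block $i_p$, condition (ii) reads: for every $p<q$, either $c_p=m_{i_p}$ or $c_q=1$. Let $p^{\ast}$ be the smallest index with $c_{p^{\ast}}\neq m_{i_{p^{\ast}}}$, with the convention $p^{\ast}=s+1$ if no such index exists. Then $c_p=m_{i_p}$ for all $p<p^{\ast}$, and (ii) forces $c_q=1$ for all $q>p^{\ast}$. Choosing the pivot $i:=i_{p^{\ast}}$ (or any $i>i_s$ in the degenerate case) recovers the pattern displayed in form (b).

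The anticipated obstacle is bookkeeping rather than conceptual difficulty: one must select the pivot $i$ carefully, track blocks that contribute no variable to the support, and correctly interpret the role of the pivotal block, whose unique nonzero entry need not a priori sit at an extreme position. Handling these edge cases cleanly is what ensures that the two forms in the statement exhaust every $u$ with $x^u\notin D$.
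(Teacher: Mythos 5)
Your approach is essentially the paper's: translate $x^u\notin D$ into the same two local constraints — (i) within a block the support is a single position or an adjacent pair, and (ii) across blocks $i<r$ one cannot simultaneously have support strictly before position $m_i$ in block $i$ and strictly after position $1$ in block $r$ — and then read the shapes off block by block. Your case~(a) argument is correct and makes explicit the two applications of~(ii) that the paper's one-line proof leaves implicit (in particular, it also shows no two distinct blocks can each carry an adjacent pair).

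The difficulty you flag at the end of case~(b), however, is not mere bookkeeping and cannot be closed by a clever choice of pivot. If the pivotal block's unique support position $c_{p^\ast}$ is strictly between $1$ and $m_{i_{p^\ast}}$ — for instance, $u$ supported in a single block at an interior position, which conditions (i) and (ii) do not forbid — then form~(b) fails for \emph{every} choice of $i$, since (b) requires the pivotal block's entry to sit in position $1$. So ``choosing $i:=i_{p^\ast}$'' does not recover form~(b) in that situation. The correct resolution, which the paper leaves unstated here but uses explicitly in the proof of Proposition~\ref{propxusuccxv}, is that such a $u$ is absorbed by form~(a) with $a_{i,\ell+1}=0$ and $\ell=c_{p^\ast}$: the template of~(a), with either of $a_{i,\ell},a_{i,\ell+1}$ allowed to vanish, is the genuinely exhaustive shape, and~(b) is the specialization $\ell=1$, $a_{i,\ell+1}=0$. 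In other words, (a) and~(b) are not a disjoint case split. To make your argument airtight you should either derive a single unified form from (i) and~(ii) and exhibit~(a),~(b) as instances of it, or explicitly note that the interior-pivot case falls under~(a)'s template rather than under~(b).
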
 

\begin{proof}
	The lemma follows from these observations. 
	\begin{enumerate}[i)]
		\item If $x^u$ contains the variables $x_{i,j}, x_{i,\ell}$ with $j < \ell$ both with nonzero exponent, then $\ell = j+1$. Consequently, $x^u$ cannot contain 3 variables from the same block with nonzero exponent. 
		\item If $x^u$ contains variables $x_{i,j}, x_{r,s}$ with $i < r$ and $j < m_i$, both with nonzero exponent, then $s=1$. 
	\end{enumerate}
\end{proof}

The following result is used to show that $D$ is equal to $\init_\prec I_2(M)$. 

\begin{proposition}\label{propxusuccxv}
        Let $\calA$ be as in~\eqref{eqn:A} (so that $I_2(M)=I_\calA$).
	If $x^u \notin D$, $x^u \succ x^v$, and $\calA u = \calA v$, then $u=v$. 
\end{proposition}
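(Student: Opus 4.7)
The plan is to assume $\calA u = \calA v$ with $x^u \succeq x^v$ and $u \notin D$, and deduce $v = u$ (so that the given strict inequality $x^u \succ x^v$ is impossible). I walk through the variables in the lex order $x_{1,1} \succ x_{1,2} \succ \cdots$ and exploit two constraints supplied by $\calA u = \calA v$: matching block degrees $\sum_j u_{r,j} = \sum_j v_{r,j}$ for each $r$ (from the first $k$ rows of $\calA$), and matching weighted degree $\sum_{r,j}(j-1)u_{r,j} = \sum_{r,j}(j-1)v_{r,j}$ (from the last row). Writing $W_r(w) := \sum_s (s-1)w_{r,s}$, Lemma~\ref{lemxunotinD} gives the shape of $u$: in every block $r$ below the ``transition index'' $i$, the support of $u$ sits at $x_{r,m_r}$ and $W_r(u) = (m_r-1)a_r$ is maximal among vectors of that block-degree; in block $i$, the support sits either on the adjacent pair $\ell,\ell+1$ (case (a)) or on $x_{i,1}$ alone (case (b)); and for $r > i$ the support sits at $x_{r,1}$, making $W_r(u) = 0$ minimal.

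First I treat blocks $r < i$, identically in both cases. Suppose $v$ first disagrees with $u$ at $x_{r,s}$ with $r < i$. If $s < m_r$ then $u_{r,s} = 0$, so $v_{r,s} > 0$ would give $x^v \succ x^u$, contrary to assumption; if $s = m_r$ then $v$ is already zero on $x_{r,1},\ldots,x_{r,m_r-1}$, so the block-$r$ degree of $v$ is $v_{r,m_r} < a_r$, violating the block-degree constraint. Hence $v$ agrees with $u$ on every block $r < i$.

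For case (a), the next step handles block $i$. The same lex argument forces $v_{i,s} = 0$ for $s < \ell$, and subtracting the already-agreeing contributions reduces the weighted-degree identity to $W_i(u) - W_i(v) = \sum_{r > i} W_r(v)$ with each $W_r(v) \geq 0$. If $v_{i,\ell} < a_{i,\ell}$, the block-degree constraint puts the deficit onto positions $s \geq \ell+1$; bounding the contribution of those positions by $\ell$ times their total mass gives $W_i(v) \geq (\ell-1)v_{i,\ell} + \ell(a_{i,\ell}+a_{i,\ell+1}-v_{i,\ell}) = \ell(a_{i,\ell}+a_{i,\ell+1}) - v_{i,\ell} > W_i(u)$. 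Symmetrically, if instead $v_{i,\ell+1} < a_{i,\ell+1}$, the deficit lands at positions $s \geq \ell+2$, and bounding by $\ell+1$ times their mass yields $W_i(v) \geq (\ell-1)a_{i,\ell} + (\ell+1)a_{i,\ell+1} - v_{i,\ell+1} > W_i(u)$. Either outcome contradicts nonnegativity of $W_i(u) - W_i(v)$, so $v_{i,\ell} = a_{i,\ell}$ and $v_{i,\ell+1} = a_{i,\ell+1}$; the block-degree constraint then kills $v_{i,s}$ for $s > \ell+1$. Finally $\sum_{r > i} W_r(v) = 0$ with nonnegative summands forces $v_{r,s} = 0$ for $s > 1$ in each block $r > i$, and the block-degree constraint fills in $v_{r,1} = a_r$. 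Case (b) is easier: once blocks $r < i$ agree, the weighted-degree identity immediately collapses to $\sum_{r \geq i} W_r(v) = 0$, and nonnegativity of the summands pins down every remaining entry.

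The main obstacle I anticipate is the block-$i$ step of case (a). The lex walk-through alone does not rule out redistributions like moving one unit from position $\ell$ to $\ell+2$, so the weighted-degree inequality is essential. The bound is tight precisely because the two active positions $\ell,\ell+1$ of $u$ are adjacent: any mass pushed further to the right strictly raises $W_i(v)$, and this excess cannot be absorbed by the nonnegative contributions of later blocks.
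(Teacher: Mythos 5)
Your proof is correct and follows essentially the same route as the paper's: walk the blocks in lex order, use the first $k$ rows of $\calA$ (block degrees) to pin down blocks $r<i$, use the last row of $\calA$ (the weighted degree) together with the block-degree constraint to pin down block $i$, and then use the vanishing of the weighted degree again to kill the tail blocks. The only cosmetic differences are that you treat case (b) of Lemma~\ref{lemxunotinD} separately rather than folding it into case (a), and you spell out the $W_i$-inequality in both positions $\ell$ and $\ell+1$ where the paper compresses this to ``and similarly.'' (Incidentally, the paper's line ``$(\calA u)_{k+1} > (\calA v)_{k+1}$'' has the inequality reversed --- pushing mass rightward raises $(\calA v)_{k+1}$, as your computation correctly shows --- but either direction yields the needed contradiction to $\calA u = \calA v$.)
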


\begin{proof}
	In Lemma~\ref{lemxunotinD}, case b) is a special case of a) where
        $\ell = 1$ and $a_{i,\ell+1} = 0$, so we may assume $u$
        satisfies case a). We also assume $u \neq 0$, and write $v =
        (b_{1,1}, b_{1,2}, \ldots, b_{k,m_k})$.  
	
	Suppose $a_1 \neq 0$. Since $x^u \succ x^v$, the monomial $x^v$ cannot
        contain any variable greater than $x_{1,m_1}$. Then, as $\calA u
        = \calA v$, $x^u$ and $x^v$ must contain the same power of
        $x_{1,m_1}$. The same argument implies that $x^u$ and $x^v$
        contain the same powers of all variables up to and including
        $x_{i-1,m_{i-1}}$.  
	
	Now again, since $x^u \succ x^v$ lexicographically,
        $a_{i,\ell} \geq b_{i,\ell}$ and $b_{i,0} = \ldots =
        b_{i,\ell-1} =0$. As $\calA u = \calA v$, we have $a_{i,\ell}
        + a_{i,\ell+1} = b_{i,\ell} + b_{i,\ell+1} + \ldots +
        b_{i,m_i}$. But if $a_{i,\ell} > b_{i,\ell}$, then $(\calA
        u)_{k+1} > (\calA v)_{k+1}$. This implies that $a_{i,\ell} =
        b_{i,\ell}$, and similarly $a_{i,\ell+1} = b_{i,\ell+1}$, so
        $b_{i,t} = 0$ for $t \geq \ell+2$.  
	
	To finish the proof, note that $(\calA u)_{k+1} = (m_1-1)a_1 +
        \ldots + (m_{i-1}-1)a_{i-1} + (\ell-1) a_{i,\ell} + \ell
        a_{i,\ell+1} = (m_1-1)b_{1,m_1} + \ldots +
        (m_{i-1}-1)b_{i-1,m_{i-1}} + (\ell-1) b_{i,\ell} + \ell
        b_{i,\ell+1}$. Because $\calA u = \calA v$, this implies that
        $b_{j,t} = 0$ for $j > i$ and $t > 1$. Again, using $\calA u =
        \calA v$, we conclude that $b_{j,1} = a_j$ for all $j > i$.  
\end{proof}

We are ready to prove Theorem~\ref{thm:initialIdeal}.

\begin{proof}[Proof of Theorem~\ref{thm:initialIdeal}]
	Since $I_2(M)$ is $\calA$-homogeneous, its initial
        ideal is generated by the initial forms of $\calA$-homogeneous
        elements of $I_2(M)$.
If $P \in I_2(M)$ is $\calA$-homogeneous and $\init_\prec P \notin D$, then
$P$ has one term by Proposition~\ref{propxusuccxv}. But since $I_2(M)$
is a toric ideal, it contains no monomials, so that such a $P$ cannot belong to $I_2(M)$. We
conclude that if $P$ is  $\calA$-homogeneous and $P \in I_2(M)$, then
$\init_\prec P \in D$.
\end{proof}

With a squarefree initial ideal in hand, we now turn to
Stanley--Reisner theory. Let $\Delta$ be the simplicial
complex on the vertex set  $\{(i,j) \mid 1\leq i \leq k, 1\leq j \leq m_i\}$
whose Stanley--Reisner ideal is $D=\init_\prec I_2(M)$. 
By definition,
this means that $D$ is generated by monomials whose index sets
correspond to nonfaces of $\Delta$. It follows from Lemma~\ref{lemxunotinD}
that $\Delta$ is the simplicial complex whose maximal faces are
\begin{equation}
\label{eqn:facets}
\{ (1,m_1), (2,m_2), \ldots, (i,m_{i-1}), (i,j), (i,j+1), (i+1,1),
\ldots, (k,1) \}  \text{ for }  1 \leq i \leq k, \, 1 \leq j \leq m_i-1,
\end{equation}
in particular, $\Delta$ is pure of dimension $k$.
Figure~\ref{fig:simplicialcpx} illustrates this simplicial complex in
an example.

\begin{figure}[!h]
	\centering
	\begin{tikzpicture}[scale=1.5]
	\tikzstyle{vertex}=[circle,thick,draw=black,fill=black,inner sep=0pt,minimum width=4pt,minimum height=4pt]
	
	\fill[gray, opacity=.3] (0,0) -- (3,0) -- (2,1) -- (0,1);
	
	\node[vertex, label=below left: ${\begin{smallmatrix}(1,1)\end{smallmatrix}}$] (x1) at (0,0) {};
	\node[vertex, label=below: ${\begin{smallmatrix}(1,2)\end{smallmatrix}}$] (x2) at (1,0) {};
	\node[vertex, label=below: ${\begin{smallmatrix}(1,3)\end{smallmatrix}}$] (x3) at (2,0) {};
	\node[vertex, label=below right: ${\begin{smallmatrix}(1,4)\end{smallmatrix}}$] (x4) at (3,0) {};
	\node[vertex, label=above left: ${\begin{smallmatrix}(2,1)\end{smallmatrix}}$] (x5) at (0,1) {};
	\node[vertex, label=above: ${\begin{smallmatrix}(2,2)\end{smallmatrix}}$] (x6) at (1,1) {};
	\node[vertex, label=above right: ${\begin{smallmatrix}(2,3)\end{smallmatrix}}$] (x7) at (2,1) {};   
	
	\draw (x1) to (x2)
	(x2) to (x3)
	(x3) to (x4)
	(x4) to (x5)
	(x5) to (x6)
	(x6) to (x7)
	(x1) to (x5)
	(x2) to (x5)
	(x3) to (x5)
	(x4) to (x6)
	(x4) to (x7);             
	
	\end{tikzpicture}
	\caption{The simplicial complex $\Delta$ for $\calS(3,2)$}
	\label{fig:simplicialcpx}
\end{figure}
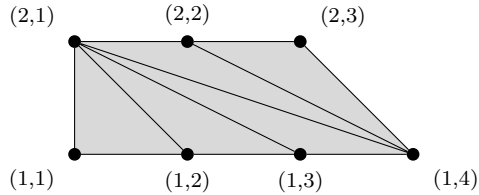

It is well known that the Hilbert series of a Stanley--Reisner
ring can be given in terms of the face numbers of the corresponding
simplicial complex. Explicitly,
\[
\Hilb(S/D;t) = \frac{1}{(1-t)^{k+1}} \sum\limits_{d=0}^{k+1} f_{d-1} t^d (1-t)^{k+1-d},
\]
where $f_d$ is the number of $d$-dimensional faces of $\Delta$. We now
compute these face numbers.

\begin{proposition}
\label{prop:faceNumbers}
	If $\Delta$ is the simplicial complex whose Stanley--Reisner
        ideal is $D$, then $f_d = \binom{k}{d}n - d \binom{k+1}{d+1}$
        for $d \geq -1$. In particular, the face numbers of $\Delta$
        depend only on $k$ and $n$, and not on $m_1,\dots,m_k$.
\end{proposition}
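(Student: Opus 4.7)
The plan is to enumerate the $d$-faces of $\Delta$ directly by exploiting the block structure inherited from $M$. Writing $V_i = \{(i,1),\ldots,(i,m_i)\}$ for the $i$-th block of vertices and noting that, since $D$ is generated in degree two, $\Delta$ is the flag complex of the graph whose edges are the complements of the degree-two generators of $D$, I partition faces by
\[
i^*(F) := \min\{i : F \cap V_i \not\subseteq \{(i,m_i)\}\},
\]
with $i^*(F) := \infty$ if no such index exists. Counting then proceeds separately on each piece of the partition.

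The case $i^*(F) = \infty$ puts $F \subseteq \{(r,m_r) : 1 \leq r \leq k\}$. Every such subset is a face, since it lies in the facet of~\eqref{eqn:facets} indexed by $i = k$, $j = m_k - 1$. So this case contributes $\binom{k}{d+1}$ faces of dimension $d$.

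In the case $i^*(F) = i$, the clique condition in $V_i$ together with the presence of some vertex $(i,j_0) \in F$ with $j_0 < m_i$ forces $F \cap V_i$ to be either a singleton $\{(i,j)\}$ or a consecutive pair $\{(i,j),(i,j+1)\}$ with $1 \leq j \leq m_i - 1$, giving $m_i - 1$ options in each of the two cardinality subcases. The same low vertex forces $F \cap V_r \subseteq \{(r,1)\}$ for every $r > i$ via the second family of generators of $D$ in~\eqref{eqn:initialIdeal}, and the minimality of $i^*$ gives $F \cap V_r \subseteq \{(r,m_r)\}$ for $r < i$. Introducing $L, M, R$ for the numbers of vertices of $F$ in the three ranges $r < i$, $r = i$, $r > i$ with $M \in \{1,2\}$ and $L + M + R = d+1$, the count for fixed $i$ is
\[
(m_i - 1)\left[\sum_{L+R=d}\binom{i-1}{L}\binom{k-i}{R} + \sum_{L+R=d-1}\binom{i-1}{L}\binom{k-i}{R}\right],
\]
which collapses via Vandermonde's and Pascal's identities to $(m_i - 1)\binom{k}{d}$. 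Summing $i$ from $1$ to $k$ yields $(n-k)\binom{k}{d}$.

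Combining the two cases gives $f_d = (n-k)\binom{k}{d} + \binom{k}{d+1}$, which I rewrite as $\binom{k}{d}n - d\binom{k+1}{d+1}$ using the identities $(d+1)\binom{k+1}{d+1} = (k+1)\binom{k}{d}$ and $\binom{k+1}{d+1} = \binom{k}{d} + \binom{k}{d+1}$. The main obstacle is the verification in the case $i^* = i$ that a single low vertex $(i,j_0) \in F$ really does force every vertex of $F$ in a later block to sit at position $1$; this is a direct check against the second family of degree-two generators of $D$ in~\eqref{eqn:initialIdeal}, and it is what makes the per-$i$ count collapse so cleanly to $(m_i - 1)\binom{k}{d}$.
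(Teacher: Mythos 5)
Your proof is correct, and it takes a genuinely different route from the paper's. The paper argues by induction on the number of blocks $k$: it views the complex $\Delta$ for $\calS(m_1-1,\ldots,m_k-1)$ as a subcomplex of $\Delta'$ for $\calS(m_1-1,\ldots,m_{k+1}-1)$ and classifies each $d$-face of $\Delta'$ into four types (old faces, old $(d-1)$-faces joined with $(k+1,1)$, faces with one vertex from $\{(k+1,j): j\geq 2\}$, and faces containing an edge $\{(k+1,j),(k+1,j+1)\}$), then verifies that the four contributions sum to the claimed formula by the inductive hypothesis. You instead give a direct, non-inductive enumeration: using that $\Delta$ is the flag complex of the underlying graph (because $D$ is quadratic), you stratify the $d$-faces by the first block index $i^*(F)$ carrying a vertex strictly below position $m_i$, observe that this forces all later blocks to contribute only the vertex $(r,1)$ and all earlier blocks only $(r,m_r)$, and then Vandermonde and Pascal collapse the per-block count to $(m_i-1)\binom{k}{d}$; summing in $i$ and adding the $i^*=\infty$ stratum gives $f_d = (n-k)\binom{k}{d}+\binom{k}{d+1}$, which you correctly convert to $\binom{k}{d}n - d\binom{k+1}{d+1}$ via $(d+1)\binom{k+1}{d+1}=(k+1)\binom{k}{d}$. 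Your approach yields a closed-form count in one pass and makes the independence from the individual $m_i$ (only via the sum $\sum(m_i-1)=n-k$) especially transparent; the paper's induction avoids the explicit flag/clique structure and the binomial identities at the cost of a four-case bookkeeping step. One small imprecision worth noting: your appeal to ``the facet indexed by $i=k$, $j=m_k-1$'' for the $i^*=\infty$ stratum implicitly assumes $m_k \geq 2$; the cleaner justification, which you already have in hand, is that $\{(r,m_r): r\in[k]\}$ is a clique and $\Delta$ is flag.
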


\begin{proof}
	We prove this by induction on $k$. Note that, by construction,
        $f_0=n$, regardless of the value of $k$. 

        If $k=1$, $\Delta$ has
        $f_1=n-1$ one-dimensional faces, namely $\{ (1,i), (1,i+1) \}$
        for $i =1,\dots,n-1$ (cf~\cite[Theorem~3.9]{prshtsam}).

	For the inductive step, let $\Delta$ be the complex associated
        to $\calS(m_1-1, \ldots, m_k-1)$ and $\Delta'$ be the complex
        associated to $\calS(m_1-1, \ldots, m_{k+1}-1)$. The complex
        $\Delta$ is naturally a subcomplex of $\Delta'$. We
        assume that
        $f_d(\Delta) = \binom{k}{d}(m_1 + \ldots + m_k) - d
        \binom{k+1}{d+1}$. Using the description of the facets of
        $\Delta$ from~\eqref{eqn:facets} (and the corresponding
        description for the facets of $\Delta'$) we see that the $d$-dimensional faces of $\Delta'$
        are: 
	\begin{itemize}
		\item $f_d(\Delta)$ $d$-dimensional faces of $\Delta$,
		\item $f_{d-1}(\Delta)$ faces of the form $\tau
                  \cup \{(k+1,1)\}$, where $\tau$ is a
                  $(d-1)$-dimensional face of $\Delta$,
		\item $\binom{k}{d}(m_{k+1}-1)$ faces with $d$ vertices
                  from the set $\{ (i,m_i) \mid 1 \leq i \leq k\}$ and one
                  vertex from $\{ (k+1,j) \mid 2 \leq j \leq
                  m_{k+1}\}$, and
		\item $\binom{k}{d-1}(m_{k+1}-1)$ faces with $d-1$
                  vertices from $\{(i,m_i) \mid 1 \leq i \leq k \}$
                  union an element of $\{ \{(k+1,j),(k+1,j+1) \} \mid
                  1 \leq j \leq m_{k+1}-1 \}$.
	\end{itemize}
	
	Adding these together and applying the inductive hypothesis
        yields $f_{d}(\Delta') = \binom{k+1}{d}(m_1 + \ldots +
        m_{k+1}) - d \binom{k+2}{d+2}$, as we wanted.
\end{proof}

The following result gives the Hilbert series of $R$; the proof is a straightforward, if hefty, bullying of binomial coefficients.

\begin{proposition}\label{prophilbertseries}
$\Hilb(R; -t) = \Hilb(S/D; -t) = \frac{1-(n-k-1)t}{(1+t)^{k+1}}$ $\qed$
\end{proposition}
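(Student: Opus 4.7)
The plan is to substitute the face-number formula of Proposition~\ref{prop:faceNumbers} into the Stanley--Reisner expression for $\Hilb(S/D;t)$, evaluate at $-t$, and collapse the resulting polynomial numerator via the binomial theorem. First I would note that the equality $\Hilb(R;-t) = \Hilb(S/D;-t)$ is immediate, since Hilbert series are invariant under Gr\"obner degeneration and $D = \init_\prec I_2(M)$ by Theorem~\ref{thm:initialIdeal}. So it suffices to establish the second equality.

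Starting from
\[
\Hilb(S/D;-t) = \frac{1}{(1+t)^{k+1}} \sum_{d=0}^{k+1} f_{d-1}\, (-t)^d (1+t)^{k+1-d},
\]
I substitute $f_{d-1} = \binom{k}{d-1} n - (d-1)\binom{k+1}{d}$ (valid for $d \geq 0$, with $\binom{k}{-1}=0$). This splits the numerator into three sums, two involving $\binom{k+1}{d}$ and one involving $\binom{k}{d-1}$. The key tool is the binomial identity
\[
\sum_{d=0}^{N}\binom{N}{d}(-t)^d (1+t)^{N-d} = \bigl((1+t)+(-t)\bigr)^N = 1,
\]
applied with $N=k$ and $N=k+1$, together with the absorption identity $d\binom{k+1}{d} = (k+1)\binom{k}{d-1}$, which lets me convert every sum indexed by $\binom{k+1}{d}$ into one indexed by $\binom{k}{d-1}$ after a shift $e = d-1$.

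The main bookkeeping step is the following. The sum $\sum_{d=0}^{k+1}\binom{k}{d-1}(-t)^d(1+t)^{k+1-d}$ reindexes (via $e=d-1$) to $-t\sum_{e=0}^{k}\binom{k}{e}(-t)^e(1+t)^{k-e} = -t$. Similarly, $\sum_{d=0}^{k+1}\binom{k+1}{d}(-t)^d(1+t)^{k+1-d} = 1$, and writing $(d-1)\binom{k+1}{d} = (k+1)\binom{k}{d-1} - \binom{k+1}{d}$ reduces the middle piece to $-(k+1)t - 1$. Assembling these three evaluations produces
\[
n(-t) - \bigl(-(k+1)t - 1\bigr) = 1 - (n-k-1)t,
\]
which gives the numerator. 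This is the step where a sign or index-shift slip is most likely, so I would track the reindexing $d \mapsto d-1$ and the convention $\binom{k}{-1}=0$ carefully; beyond that the computation is purely formal manipulation of binomial coefficients.
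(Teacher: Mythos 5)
Your computation is correct, and it fills in exactly the ``straightforward, if hefty, bullying of binomial coefficients'' that the paper declines to write out: starting from the Stanley--Reisner Hilbert series formula, substituting the face numbers of Proposition~\ref{prop:faceNumbers}, and collapsing via the binomial theorem together with the absorption identity $d\binom{k+1}{d}=(k+1)\binom{k}{d-1}$. Since the paper gives no written proof, your argument is the intended one rather than an alternative; I verified the three subsums ($-nt$, $1$, and $-(k+1)t-1$) and their assembly to $1-(n-k-1)t$, and all indexing and sign conventions (in particular $\binom{k}{-1}=0$) are handled correctly.
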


We are finally ready to prove Theorem~\ref{bettimain}.

\begin{proof}[Proof of Theorem~\ref{bettimain}]
Since $R$ is a Koszul ring, it follows from
Proposition~\ref{prophilbertseries} that the Poincar{\'e} series of $R$ is 
\[
P_R(t) = \frac{1}{\Hilb(R; -t)} = \dfrac{(1+t)^{k+1}}{1-(n-k-1)t} =
\sum\limits_{i=0}^{\infty} \left[ \sum\limits_{j=0}^{i}
  \binom{k+1}{j}(n-k-1)^{i-j} \right] t^{i}.
\] 
For the last equality, we use 
$(1+t)^{k+1} = \sum\limits_{i=0}^{k+1} \binom{k+1}{i}t^i$ and
$\dfrac{1}{1-(n-k-1)t} = \sum\limits_{i=0}^{\infty} (n-k-1)^i t^i$.  
We conclude that $\beta_i^R(\bbk) = \sum\limits_{j=0}^{i}
  \binom{k+1}{j}(n-k-1)^{i-j}$.
The special formula for
$\beta_{k+r}(\bbk)$ follows from the simplification of this sum when $\binom{k+1}{j}$ becomes $0$. 
\end{proof}

\section{The Resolution of $\bbk$ for $k=2$}
\label{sec:Resolution}

One of the difficulties when dealing with infinite free resolutions
and unbounded Betti numbers is to give an explicit presentation for the
differentials. In the case $k=2$, the combinatorics of the ring $R$
ensure a strong block structure that makes giving explicit matrices
achievable.  

\begin{notation}
In the case $k=2$, we write $\calS(m-1,n-m-1)$ instead of
$\calS(m_1-1,m_2-1)$, and forego double indexing  to replace $x_{1,j}$
by $x_j$ and $x_{2,j}$ by $x_{m+j}$. Finally, we denote $p=n-m$.

With this new notation, the matrix~\eqref{eqn:minorMatrix} is replaced
by the  $2 \times (n-2)$ matrix
\[
M = \left[ 
\begin{array}{cccc|cccc}
x_1 & x_2 & \ldots & x_{m-1} & x_{m+1} & x_{m+2} & \ldots & x_{n-1} \\ 
x_2 & x_3 & \ldots & x_m & x_{m+2} & x_{m+3} & \ldots & x_{n}
\end{array}
\right],
\]
and the ideal $I_2(M)$ is the toric ideal $I_\calA$ associated to the $3 \times n$ matrix 
\[
\calA = \left[ \begin{array}{ccccc|cccc}
1 & 1 & 1 & \ldots & 1 & 0 & 0 & \ldots & 0 \\ 
0 & 0 & 0 & \ldots & 0 & 1 & 1 & \ldots & 1 \\ 
0 & 1 & 2 & \ldots & m-1 & 0 & 1 & \ldots & p-1
\end{array} \right].
\]
\end{notation}

Our ultimate goal is to construct the minimal free resolution of
$\bbk$ as an $R$-module.
Our point of departure is the short exact sequence
\begin{equation}
\label{eqn:shortExact}
0 
\rightarrow 
\langle x_1, \ldots, x_m \rangle \cap \langle x_{m+1}, \ldots, x_n \rangle 
\rightarrow 
\langle x_1, \ldots, x_m \rangle \oplus \langle x_{m+1}, \ldots, x_n \rangle
\rightarrow 
\langle x_1, \ldots, x_n \rangle 
\rightarrow 
0.
\end{equation}

We construct free resolutions $(\calF_\bullet(I_1),\del_{I_1,i})$,
$(\calF_\bullet(I_2),\del_{I_2,i})$, and $(\calF_\bullet,\del_{J,i})$
of the ideals $I_1 = \langle x_1, \ldots, x_m \rangle$, $I_2 = \langle
x_{m+1}, \ldots, x_n \rangle$ and $J = I_1 \cap I_2$ respectively. We
then combine these resolutions via mapping cone to make a resolution
of $\frakm = \langle x_1, \ldots, x_n \rangle$. 
Augmenting the resolution of $\frakm$ to be a resolution of $\bbk =
R/\frakm$ results in a shift of one step, and minimality is assured by
the previous Betti number computations. We obtain the resolution
\[ 
\calF_\bullet: \cdots \xrightarrow{\del_6} R^{(n-2)^3(n-3)^2} \xrightarrow{\del_5} R^{(n-2)^3(n-3)} \xrightarrow{\del_4} R^{(n-2)^3} \xrightarrow{\del_3} R^{n^2-3n+3} \xrightarrow{\del_2} R^n \xrightarrow{\del_1} R \xrightarrow{\del_0} \bbk \rightarrow 0.
\]
%

\subsection{The Differentials of $\calF_\bullet$}
Our first objective is to explicitly describe the differentials
$\del_i$ of $\calF_\bullet$. These differentials are induced by a
mapping cone. More precisely,
\[
\del_1 = 
[x_1 \; x_2 \; \cdots \; x_n], \;
\del_2 = \left[ \begin{array}{c|c}
\begin{array}{r|r} \del_{I_1,1} & \\ \hline & \del_{I_2,1} \end{array} & \alpha_0
\end{array} \right], \;
\del_{i+1} = \left[ \begin{array}{c|c}
\begin{array}{r|r} \del_{I_1,i} & \\ \hline & \del_{I_2,i} \end{array} & \alpha_{i-1} \\ 
\hline
\mathbf{0} & -\del_{J,i-1}
\end{array} \right] \textrm{ for all } i \geq 2.
\]

The maps $\alpha$ are the chain maps from $\calF_\bullet(J)$ to $\calF_\bullet(I_1) \oplus \calF_\bullet(I_2)$, which are:
\[
\begin{aligned} 
\alpha_0 &= \left[ \begin{array}{cc}
\begin{matrix} 
x_{m+1} & 0 & \ldots & 0 & \multicolumn{3}{c}{\multirow{4}{*}{$\mathbf{0}$ }} \\ 
0 & x_{m+1} & \ldots & 0 \\ 
0 & \ddots & \ddots & 0 \\ 
0 & 0 & \ldots & x_{m+1} &
x_{m+2} & x_{m+3} & \ldots & x_n 
\end{matrix}\\  
\hline
\begin{matrix} 
-x_1 & -x_2 & \ldots & -x_{m} & 0 & 0 & \cdots & 0 \\ 
\multicolumn{4}{c}{\multirow{3}{*}{$\mathbf{0}$ }} & -x_m & 0 & \cdots & 0 \\ 
& & & & 0 & -x_m & \cdots & 0 \\ 
& & & & 0 & \ddots & \ddots & 0 \\ 
& & & & 0 & 0 & \cdots & -x_m
\end{matrix}
\end{array} \right] \in R^{n \times (n-1)} \\
\alpha_i &= \left[ \begin{array}{cc}
x_{m+1} \cdot \bbid_{(m-1)(n-2)(n-3)^{i-1}} & 0 \\ 
0 & -x_m \cdot \bbid_{(p-1)(n-2)(n-3)^{i-1}} 
\end{array}
\right] \in R^{(n-2)^2(n-3)^{i-1} \times (n-2)^2(n-3)^{i-1}} 
\end{aligned} 
\] 

The constituent resolutions $\calF_\bullet(I_1)$,
$\calF_\bullet(I_2)$, and $\calF_\bullet(J)$ have highly structured
differentials, the building blocks of which are now given:  
\[
\begin{aligned}
\varphi_0 &= \begin{bmatrix*}[c]
x_2 & x_3 & \ldots & x_m & x_{m+2} & \ldots & x_n \\ 
-x_1 & -x_2 & \ldots & -x_{m-1} & -x_{m+1} & \ldots & -x_{n-1}
\end{bmatrix*} \in R^{2 \times (n-2)} \\
\Phi_d &= \begin{bmatrix*}[c] 
\varphi_0 & \mathbf{0}^{1 \times (n-2)} & \cdots & \cdots & \cdots & \mathbf{0}^{1 \times (n-2)}\\ 
\mathbf{0}^{1 \times (n-2)} & \varphi_0 & \mathbf{0}^{1 \times (n-2)} & \cdots & \cdots & \mathbf{0}^{1 \times (n-2)} \\
\vdots & \mathbf{0}^{1 \times (n-2)} & \varphi_0 & \mathbf{0}^{1 \times (n-2)} & \cdots & \mathbf{0}^{1 \times (n-2)} \\ 
\vdots & \vdots & \ddots & \ddots & \ddots & \vdots \\ 
\mathbf{0}^{1 \times (n-2)} & \cdots & \cdots & \cdots & \mathbf{0}^{1 \times (n-2)} & \varphi_0
\end{bmatrix*} \in R^{d \times (d-1)(n-2)}
\end{aligned}
\]
Note that $\Phi_d$ is very sparse and consists of block components,
but is not a block diagonal matrix. The structure of $\Phi_d$ is
illustrated in Figure~\ref{fig:Phimatrix}, with gray squares denoting non-zero entries, and
empty squares denoting 0. These nonzero $\varphi_0$-blocks appear
$d-1$ times. 

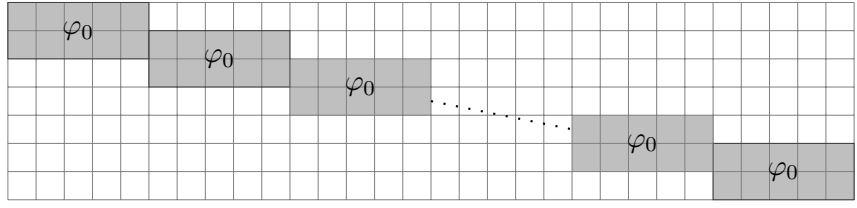
\begin{figure}[!h]
	\centering
	\begin{tikzpicture}[scale=.75]
	\draw[step=.5cm,gray,very thin] (-7.5,-1) grid (7.5,2.5);
	\foreach \x/\y in {-7.5/2.5, -5/2, 5/0} {
		\draw [fill=gray, opacity=.5] (\x,\y) rectangle (\x+2.5,\y-1);
	}
	\fill[gray, path fading=fade right, fading transform={rotate=-30}, opacity = .5] (-2.5,1.5) rectangle (0,.5);
	\fill[gray, path fading=fade left, fading transform={rotate=-30}, opacity=.5] (2.5,.5) rectangle (5,-.5);
	\draw[loosely dotted, thick] (0,.75) -- (2.5,.25);
	\node (a) at (-6.25,2) {$\varphi_0$};
	\node (b) at (-3.75,1.5) {$\varphi_0$};
	\node (c) at (-1.25,1) {$\varphi_0$};
	\node (d) at (3.75,0) {$\varphi_0$};
	\node (e) at (6.25,-.5) {$\varphi_0$};
	\end{tikzpicture}
	\caption{The structure of $\Phi_d$}
	\label{fig:Phimatrix}
\end{figure}
We denote by $u_i \in R^{(m-2)(n-2) \times (n-2)}$ and $v_i \in
R^{(p-2)(n-2) \times (n-2)}$ the following matrices, which are
almost entirely composed of zeros save for a single row that equals
the first row or second row of $M$, respectively. More precisely,  
\[
u_i = \begin{bmatrix}
\mathbf{0}^{i(n-2)+m-1 \times n-2} \\ 
\begin{matrix}
x_1 & \ldots & x_{m-1} & x_{m+1} & \ldots & x_{n-1}
\end{matrix}\\
\mathbf{0}^{((m-i-2)(n-2)-m) \times n-2}
\end{bmatrix}  \qquad
v_i = \begin{bmatrix} 
\mathbf{0}^{i(n-2)+m-2 \times n-2} \\ 
\begin{matrix}
-x_2 & \ldots & -x_m & -x_{m+2} & \ldots & -x_{n}
\end{matrix} \\ 
\mathbf{0}^{(((p-i-2)(n-2))-m+1) \times n-2}
\end{bmatrix} .
\]

Despite the length of the exponents, these matrices are simple: $u_i$
is the $(m-2)(n-2) \times (n-2)$ matrix with the top row of $M$ in the
$(i(n-2)+m)$-th row, and $v_i$ is the $(p-2)(n-2) \times (n-2)$ matrix
with the negative of the bottom row of $M$ in the $(i(n-2)+m-1)$-st
row.  

Finally, we introduce the following notation:
\[ 
\begin{aligned}
\varphi_1 &= \left[ 
\begin{array}{c|c|c}
\Phi_{m-1} & 
\begin{matrix} 
x_{m+1} & 0 & \ldots & 0 & \multicolumn{3}{c}{\multirow{4}{*}{$\mathbf{0}$ }} \\ 
0 & x_{m+1} & \ldots & 0 \\ 
0 & \ddots & \ddots & 0 \\ 
0 & 0 & \ldots & x_{m+1} &
x_{m+2} & x_{m+3} & \ldots & x_n 
\end{matrix} & 
\mathbf{0} \\ 
\hline
\mathbf{0} & 
\begin{matrix} 
-x_1 & -x_2 & \ldots & -x_{m-1} & -x_m & 0 & \cdots & 0 \\ 
\multicolumn{4}{c}{\multirow{3}{*}{$\mathbf{0}$ }} & 0 & -x_m & \cdots & 0 \\ 
& & & & 0 & \ddots & \ddots & 0 \\ 
& & & & 0 & 0 & \cdots & -x_m
\end{matrix} & 
\Phi_{p-1}
\end{array} 
\right] \in R^{(n-2) \times (n-2)(n-3)} \\
\varphi_2 &= \left[ 
\begin{array}{c|ccc|c}
	\bigoplus\limits_{m-2} \varphi_1 & 
	\begin{matrix}
		u_0 & 
		u_1 &
		\cdots &
		u_{m-3}
	\end{matrix} 
	& \mathbf{0} 
	& \mathbf{0} & 
	\mathbf{0} \\
	\hline
	\mathbf{0} & 
	& 
	-\Phi_{n-2} & 
	& 
	\mathbf{0} \\ 
	\hline
	\mathbf{0} & 
	\mathbf{0} & 
	\mathbf{0} &
	\begin{matrix}
		v_0 & 
		v_1 & 
		\cdots & 
		v_{p-3}
	\end{matrix} &  
	\bigoplus\limits_{p-2} \varphi_1
\end{array}
\right] \in R^{(n-2)(n-3) \times (n-2)(n-3)^2} \\ 
\varphi_i &= \varphi_{i-1}^{\oplus (m-2)} \bigoplus \varphi_{i-2}^{\oplus(n-3)} \bigoplus \varphi_{i-1}^{\oplus (p-2)} \in R^{(n-2)(n-3)^{i-1} \times (n-2)(n-3)^{i}} \textrm{    for } i \geq 3
\end{aligned}
\]

The presentation for $\varphi_2$ is perhaps deceiving; the brunt of
the matrix is a direct sum of $\varphi_1$'s. It is only (most of) the
middle $(n-3)(n-2)$ columns that have additional entries above or
below the middle $-\Phi_{n-2}$. 

Using these $\varphi$'s, we construct resolutions of $J$, $I_1$ and
$I_2$. The ideal $J$ has resolution over $R$ as shown below:  

\[
\calF_{\bullet}(J) : \cdots \xrightarrow{\del_{J,4}} R^{(n-2)^2(n-3)^2} \xrightarrow{\del_{J,3}} R^{(n-2)^2(n-3)} \xrightarrow{\del_{J,2}} R^{(n-2)^2} \xrightarrow{\del_{J,1}} R^{n-1} \xrightarrow{\del_{J,0}} J \rightarrow 0
\]
where 
\[ 
\begin{aligned}
\del_{J,0} &= \begin{bmatrix*}
x_1x_{m+1} & x_2x_{m+1} & \ldots& x_mx_{m+1} & x_mx_{m+2} & x_mx_{m+3} & \ldots & x_mx_n
\end{bmatrix*} \in R^{1 \times (n-1)} \\
\del_{J,1} &= \Phi_{n-1} \in R^{(n-1) \times (n-2)^2} \\
\del_{J,i} &= \varphi_{i-1}^{\oplus (n-2)} \in R^{(n-2)^2(n-3)^{i-2} \times (n-2)^2(n-3)^{i-1}} \textrm{    for } i \geq 2
\end{aligned}
\]

The ideal $I_1$ has resolution over $R$ as shown below: 
\[
\calF_{\bullet}(I_1) : \cdots \xrightarrow{\del_{I_1,4}} R^{(m-1)(n-2)(n-3)^2} \xrightarrow{\del_{I_1,3}} R^{(m-1)(n-2)(n-3)} \xrightarrow{\del_{I_1,2}} R^{(m-1)(n-2)} \xrightarrow{\del_{I_1,1}} R^{m} \xrightarrow{\del_{I_1,0}} I_1 \rightarrow 0
\]
where 
\[ 
\begin{aligned}
\del_{I_1,0} &= \begin{bmatrix*}
x_1 & x_2 & \ldots & x_m
\end{bmatrix*} \in R^{1 \times m}\, ; \qquad
\del_{I_1,1} = \Phi_{m} \in R^{m \times (m-1)(n-2)} \\
\del_{I_1,i} &= \varphi_{i-1}^{\oplus (m-1)} \in R^{(m-1)(n-2)(n-3)^{i-2} \times (m-1)(n-2)(n-3)^{i-1}} \textrm{    for } i \geq 2
\end{aligned}
\]

The ideal $I_2$ has resolution over $R$ as shown below: 
\[
\calF_{\bullet}(I_2) : \cdots \xrightarrow{\del_{I_2,4}} R^{(p-1)(n-2)(n-3)^2} \xrightarrow{\del_{I_2,3}} R^{(p-1)(n-2)(n-3)} \xrightarrow{\del_{I_2,2}} R^{(p-1)(n-2)} \xrightarrow{\del_{I_2,1}} R^{p} \xrightarrow{\del_{I_2,0}} I_2 \rightarrow 0
\]
where 
\[
\begin{aligned}
\del_{I_2,0} &= \begin{bmatrix*}
x_{m+1} & x_{m+2} & \ldots & x_n 
\end{bmatrix*} \in R^{1 \times p} \, ; \qquad
\del_{I_2,1} = \Phi_{p} \in R^{(p) \times (p-1)(n-2)} \\
\del_{I_2,i} &= \varphi_{i-1}^{\oplus(p-1)} \in R^{(p-1)(n-2)(n-3)^{i-2} \times (p-1)(n-2)(n-3)^{i-1}} \textrm{    for } i \geq 2
\end{aligned}
\]

Our main result is as follows.

\begin{theorem}
\label{thm:resolution}
$\calF_\bullet$ constructed above is the minimal free resolution of $\bbk$ over $R$.
\end{theorem}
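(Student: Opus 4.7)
The plan is to build $\calF_\bullet$ via a mapping cone on the short exact sequence~\eqref{eqn:shortExact}, then augment to $\bbk$. Concretely: first verify that $\calF_\bullet(I_1)$, $\calF_\bullet(I_2)$, and $\calF_\bullet(J)$ are free $R$-resolutions of their respective ideals; next verify that the $\alpha_i$ assemble into a chain map lifting $J \hookrightarrow I_1 \oplus I_2$; then the mapping cone produces a resolution of $\frakm$, which augmented by $R^n \xrightarrow{[x_1 \; \cdots \; x_n]} R \to \bbk \to 0$ yields $\calF_\bullet$. Minimality is then immediate, since every entry of every $\del_i$ listed in the construction is a variable, so $\calF_\bullet \otimes_R \bbk = 0$ in positive degrees; as a consistency check, the ranks of $\calF_i$ agree with $\beta_i^R(\bbk) = (n-3)^{i-3}(n-2)^3$ for $i \geq 3$ from Theorem~\ref{bettimain} at $k=2$.

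The complex conditions for the three auxiliary resolutions reduce to the matrix identities $\varphi_0 \cdot \Phi_d = 0$, $\varphi_0 \cdot \varphi_1 = 0$, and $\varphi_i \cdot \varphi_{i+1} = 0$ for $i \geq 1$. The first two follow from the $2\times 2$ minor relations of $M$ that hold in $R$, together with direct computation in $\Phi_d$ and $\varphi_1$. For $i \geq 3$, the recursion $\varphi_i = \varphi_{i-1}^{\oplus (m-2)} \oplus \varphi_{i-2}^{\oplus (n-3)} \oplus \varphi_{i-1}^{\oplus (p-2)}$ reduces $\varphi_i \cdot \varphi_{i+1} = 0$ to previously handled cases by block arithmetic. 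Exactness of the auxiliary complexes I would prove by induction on homological degree in the same recursive fashion, decomposing kernels at each stage according to the block-direct-sum structure. The chain-map identity $\alpha_i \del_{J,i+1} = (\del_{I_1,i+1} \oplus \del_{I_2,i+1}) \alpha_{i+1}$ for $i \geq 1$ is automatic since $\alpha_i$ is block-diagonal scalar multiplication and the $\del$'s are block copies of $\varphi_i$; the base case involving $\alpha_0$ is a direct check using $\Phi_{n-1}$ and $\varphi_1$ together with the minors of $M$.

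The main obstacle is establishing exactness of $\calF_\bullet(I_1)$, $\calF_\bullet(I_2)$, and $\calF_\bullet(J)$. The block-recursive structure of $\varphi_i$ for $i \geq 3$ makes induction natural, but the mixed block $\varphi_1$, which entangles the $I_1$- and $I_2$-halves of the complex, and the boundary blocks $u_j, v_j$ appearing in $\varphi_2$ break the clean block-diagonal pattern and require delicate bookkeeping of kernels at the transition between the two halves. Once the subsidiary resolutions are established, the chain-map, mapping-cone, and minimality steps are routine.
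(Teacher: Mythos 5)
Your high-level architecture matches the paper exactly: verify the three subsidiary complexes $\calF_\bullet(I_1)$, $\calF_\bullet(I_2)$, $\calF_\bullet(J)$ are resolutions, check that the $\alpha_i$ form a chain map lifting $J \hookrightarrow I_1 \oplus I_2$, take the mapping cone to resolve $\frakm$, augment, and read off minimality either by inspecting the entries or by comparing ranks against Theorem~\ref{bettimain}. The verification that the subsidiary complexes \emph{are} complexes (via $\varphi_i \varphi_{i+1} = 0$ reducing to the $2\times 2$ minor relations and block arithmetic) also matches.

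The gap is in the one step you yourself flag as ``the main obstacle,'' and it is a real gap, not just a deferred calculation. You propose to prove exactness by ``induction on homological degree\dots decomposing kernels at each stage according to the block-direct-sum structure,'' and then correctly observe that the mixed block $\varphi_1$ and the $u_j,v_j$ columns of $\varphi_2$ break the block-diagonal pattern and would require ``delicate bookkeeping.'' That bookkeeping is not carried out, and there is no reason given to believe the induction will close. The paper sidesteps kernel bookkeeping entirely. It uses the acyclicity criterion from \cite[Lemma~20.10]{ecawav}: a complex of free modules $F \xrightarrow{f} G \xrightarrow{g} H$ with $I(f)=I(g)=A$ is exact iff $\rank f + \rank g = \rank G$. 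The plan is then: (i) compute $\rank\Phi_d$ and $\rank\varphi_i$, where the upper bound comes from Sylvester's rank inequality applied to $\varphi_{i-1}\varphi_i = 0$ and the lower bound from exhibiting an explicit nonvanishing minor (Proposition~\ref{phirank}); (ii) show each ideal of maximal minors $I(\varphi_i)$, $I(\Phi_d)$ contains a power of every variable $x_\ell$ (Proposition~\ref{phiminors}); (iii) localize at any prime $\frakq\neq\frakm$, where step (ii) forces $I(\del_{J,i,\frakq}) = R_\frakq$ and the rank equation then gives local exactness; (iv) at $\frakm$ itself, note that any nonzero homology module would be supported only at $\frakm$ and hence have depth $0$, but $R$ is Cohen--Macaulay of dimension $3$ (normal semigroup ring), so free modules and their submodules have depth $\geq 3$, and the short exact sequence $0 \to B_i \to C_i \to H_i \to 0$ forces $\depth H_i \geq 2$, a contradiction. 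None of this requires tracking kernels through the $\varphi_1$ or $\varphi_2$ interfaces. Without this (or some equivalent replacement), your argument establishes that $\calF_\bullet(J)$ etc.\ are complexes but not that they are resolutions, and the mapping-cone conclusion does not follow.

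One small correction unrelated to the gap: your check $\beta_i^R(\bbk) = (n-3)^{i-3}(n-2)^3$ is indeed correct for $i\geq 3$, but note the resolution begins $R \to \bbk$, so the first few ranks $1, n, n^2-3n+3$ must be checked separately against the general sum formula; they do agree.
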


\subsection{Outline of the proof of Theorem~\ref{thm:resolution}} 
The remainder of this section is devoted to showing that
$\calF_\bullet$ is indeed the minimal free resolution of $\bbk$ over
$R$. We now lay out the steps in this proof.

Most of the work goes to showing that $\calF_\bullet(I_1)$,
$\calF_\bullet(I_2)$ and $\calF_\bullet(J)$ are free resolutions of
$I_1$, $I_2$ and $J$ respectively. The matrices considered in these
three cases have very similar structure, and the details in proving
exactness are virtually identical. Thus, we give only the proof that
$\calF_\bullet(J)$ is a resolution. Exactness of $\calF_\bullet(J)$ is shown in Subsection~\ref{subsec:exact}, using ideas from~\cite{bewmace}. 

What remains is to provide the map of complexes  $\alpha: \calF_\bullet(J) \to
\calF_\bullet(I_1) \oplus \calF_\bullet(I_2)$ lifting the inclusion $J
\to I_1\oplus I_2$ from the short exact
sequence~\eqref{eqn:shortExact}. This is done in Subsection~\ref{subsec:mappingCone}.

Once $\alpha$ is constructed, the mapping cone procedure ensures that
$\calF_\bullet$ is exact, and thus a free resolution of $\bbk$. That
it is the \emph{minimal} free resolution of $\bbk$ follows by
inspection, or by Theorem~\ref{bettimain}.

\subsection{$\calF_\bullet(J)$ is exact} 
\label{subsec:exact}

We need generators for $J = \langle x_1, \ldots, x_m \rangle \cap \langle x_{m+1}, \ldots, x_n \rangle$. Clearly,
\[ J = 
\left \langle \begin{array}{cccc}
x_1x_{m+1}, & x_1x_{m+2}, & \ldots, & x_1x_n, \\
x_2x_{m+1}, & x_2x_{m+2}, & \ldots, & x_2x_n, \\ 
\vdots & \vdots & \vdots & \vdots \\
x_mx_{m+1} & x_mx_{m+2}, & \ldots, & x_mx_n
\end{array} \right \rangle .
\]

However, many of these monomials are equal in $R$; in fact, $x_ix_j =
x_kx_\ell$ if $i+j = k + \ell$, as long as $j \neq m$ and $k \neq
m+1$. This means that, in the above arrangement, all monomials on the same
skew-diagonal are the same, for example, $x_3x_{m+1} = x_2x_{m+2} =
x_1x_{m+3}$. 
Consequently,
\[ J = \langle x_1x_{m+1}, x_2x_{m+1}, \ldots, x_mx_{m+1}, x_mx_{m+2}, x_mx_{m+3}, \ldots, x_mx_n \rangle. \]
We start by checking that we are working with complexes.
\begin{proposition}\label{FJcomplex}
	$\calF_\bullet(J)$, $\calF_{\bullet}(I_1)$ and $\calF_\bullet(I_2)$ are complexes. 
\end{proposition}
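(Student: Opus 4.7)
The plan is to verify $\del_{\bullet}\del_{\bullet+1}=0$ at every step of each of the three complexes. Thanks to the block structure of the differentials, every such composition reduces either to a relation among the minors of $M$ (which vanish in $R$) or to a smaller instance of the same identity, allowing for an inductive argument.

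First, I would handle $\del_{J,0}\cdot\del_{J,1} = \del_{J,0}\cdot\Phi_{n-1}$ by direct inspection. The staircase $\varphi_0$-block structure of $\Phi_{n-1}$ pairs consecutive generators $g_i, g_{i+1}$ of $J$ column by column, producing expressions of the form $g_i x_a - g_{i+1} x_b$. Writing out the generators explicitly and using the fact that $x_i x_j = x_k x_\ell$ in $R$ whenever $(i,j)$ and $(k,\ell)$ lie on the same skew-diagonal of $M$, each such expression equals a $2\times 2$ minor of $M$, hence zero in $R$. The computations for $\del_{I_1,0}\cdot\Phi_m$ and $\del_{I_2,0}\cdot\Phi_p$ are identical (and simpler, since the generators are individual variables).

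Every higher-degree composition reduces to the single identity $\varphi_{i-1}\cdot\varphi_i = 0$ for $i\geq 1$, by the following observations: the product $\Phi_d\cdot\varphi_1^{\oplus(d-1)}$ has its $k$-th diagonal block equal to $\varphi_0\cdot\varphi_1$ (because $\Phi_d$ has nonzero entries only in rows $k, k+1$ within the columns of its $k$-th $\varphi_0$-block), while for $i\geq 2$ one has $\varphi_{i-1}^{\oplus(n-2)}\cdot\varphi_i^{\oplus(n-2)} = (\varphi_{i-1}\cdot\varphi_i)^{\oplus(n-2)}$. I would prove $\varphi_{i-1}\cdot\varphi_i=0$ by strong induction on $i$. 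For $i\geq 4$, both factors are block-diagonal by the recursion $\varphi_j = \varphi_{j-1}^{\oplus(m-2)}\oplus\varphi_{j-2}^{\oplus(n-3)}\oplus\varphi_{j-1}^{\oplus(p-2)}$, and a short arithmetic check shows the column-block sizes of $\varphi_{i-1}$ match the row-block sizes of $\varphi_i$. The product then decomposes as
\[
(\varphi_{i-2}\cdot\varphi_{i-1})^{\oplus(m-2)} \oplus (\varphi_{i-3}\cdot\varphi_{i-2})^{\oplus(n-3)} \oplus (\varphi_{i-2}\cdot\varphi_{i-1})^{\oplus(p-2)},
\]
which vanishes by the inductive hypothesis.

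The main obstacle will be the base cases $\varphi_0\cdot\varphi_1=0$, $\varphi_1\cdot\varphi_2=0$, and $\varphi_2\cdot\varphi_3=0$, since $\varphi_1$ and $\varphi_2$ lack the clean block-diagonal form of later $\varphi_i$'s: $\varphi_1$ interposes a connecting column group containing $x_{m+1}$'s and $-x_m$'s between its two $\Phi$-blocks, and $\varphi_2$ contains the interstitial pieces $u_0,\dots,u_{m-3}$, the central $-\Phi_{n-2}$, and $v_0,\dots,v_{p-3}$ gluing together the outer $\varphi_1^{\oplus(\cdot)}$ blocks. Carrying out these base-case products carefully, and verifying that every nonzero entry reduces to a $2\times 2$ minor of $M$, will be the labor-intensive part of the argument.
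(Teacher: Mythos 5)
Your proposal follows the same strategy as the paper: the key observation is that $\varphi_i \circ \varphi_{i+1} = 0$, which the paper states holds because the entries of each such product are either zero or binomials in $I_2(M)$, and then the direct-sum structure of the differentials finishes the job. You simply make explicit what the paper calls a ``straightforward, if tedious, calculation,'' organizing it as a strong induction using the recursion $\varphi_j = \varphi_{j-1}^{\oplus(m-2)}\oplus\varphi_{j-2}^{\oplus(n-3)}\oplus\varphi_{j-1}^{\oplus(p-2)}$ for $j\geq 3$, with the base cases $\varphi_0\varphi_1$, $\varphi_1\varphi_2$, $\varphi_2\varphi_3$ (the last requiring hand-checking the $u_j$, $v_j$, and $\Phi_{n-2}$ interactions) handled by reducing each entry to a $2\times 2$ minor of $M$. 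The block-size bookkeeping you give for $i\geq 4$ is correct, as is the reduction of $\Phi_d\cdot\varphi_1^{\oplus(d-1)}$ to copies of $\varphi_0\varphi_1$. This is the paper's argument with the details filled in.
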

\begin{proof}
This is a straightforward, if tedious, calculation.
A key observation is that $\varphi_i \circ \varphi_{i+1} = 0$ for all
$i$. This follows, as each of these compositions has entries that
are either $0$ or binomials in $I_2(M)$. Given the direct sum structure
of the differentials, this is enough to show our proposed differentials
compose to zero. 
\end{proof}

Our next goal is to show that $\calF_\bullet(J)$ is exact. (The same
argument, with minor modifications, shows the same for
$\calF_\bullet(I_1)$ and $\calF_\bullet(I_2)$.) We need some notation.

\begin{definition}
\label{def:rank}
Let $A$ be a noetherian commutative ring.
Let $f: F \rightarrow G$ be a map of free $A$-modules, which is
represented by a matrix with entries in $A$. The \emph{rank} of $f$ is
the size of the largest nonvanishing minor of this matrix. If $f$ has
rank $r$, we use $I(f)$ to denote the ideal generated by the $r\times
r$ minors of (the matrix representing) $f$.
\end{definition}

The following results are used to prove exactness.

\begin{lemma}{\cite[Lemma~20.10]{ecawav}}\label{exactlem}
	Let $A$ be a commutative noetherian ring.
        A complex $F~\xrightarrow{f}~G~\xrightarrow{g}~H$ of free
        $A$-modules with $I(f) = I(g) = A$ is exact iff $\rank f +
        \rank g = \rank G$.   
\end{lemma}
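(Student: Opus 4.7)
The plan is to prove exactness \emph{locally} and then conclude globally via the standard fact that a finitely generated $A$-module vanishes if and only if it vanishes after localization at every prime. Because $I(f) = I(g) = A$, both ideals remain the unit ideal after localizing at any prime $\mathfrak{p}$, so we may assume throughout that $A$ is local and each of $I(f), I(g)$ contains a unit; ranks are preserved under localization, so the biconditional only needs to be proved in this local setting.

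The main tool is a normal-form lemma for a map $\psi\colon P \to Q$ of free modules over a local ring with $\rank \psi = r$ and some $r \times r$ minor of $\psi$ a unit: after invertible changes of basis on $P$ and $Q$, the matrix of $\psi$ becomes $\bigl[\begin{smallmatrix} \bbid_r & 0 \\ 0 & 0 \end{smallmatrix}\bigr]$. The unit minor is reduced to $\bbid_r$ by Gaussian elimination, and any other nonzero entry would yield a nonzero $(r+1) \times (r+1)$ minor, contradicting $\rank \psi = r$. Consequently $\Ima \psi$ and $\ker \psi$ are free direct summands of $Q$ and $P$, of ranks $r$ and $\rank P - r$ respectively.

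For the forward implication, assume the complex is exact and localize. Putting $g$ in normal form displays $G$ as $\ker g \oplus \Ima g$, so $\rank G = \rank \ker g + \rank g$. Exactness gives $\ker g = \Ima f$, and since $\rank \Ima f = \rank f$, we conclude $\rank f + \rank g = \rank G$.

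For the converse, localize again and put $g$ in normal form, so that $\ker g$ is a free summand of $G$ of rank $\rank G - \rank g = \rank f$. The relation $g \circ f = 0$ lets $f$ factor as $\bar f\colon F \to \ker g$, and applying the normal-form lemma to $\bar f$ (once one knows $I(\bar f) = A$ and $\rank \bar f = \rank \ker g$) forces $\bar f$ to be surjective, i.e., $\Ima f = \ker g$ locally and therefore globally. I expect the main subtlety to be precisely the alignment between the two normal forms: once a basis for $G$ is chosen that displays $\ker g$ as a direct summand, one must confirm that the maximal minors of $\bar f\colon F \to \ker g$ still generate the unit ideal and that $\rank \bar f = \rank f$. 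The direct-summand structure handles both, since the nonzero rows of $f$ relative to this basis of $G$ are exactly the rows of $\bar f$, so minors match up and $I(\bar f) = I(f) = A$.
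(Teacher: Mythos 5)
The paper cites this lemma from Eisenbud's book and does not prove it, so there is no in-paper argument to compare against; your proposal must be judged on its own. The proof you give is correct and is essentially the standard one: reduce to the local case by localization, use the hypothesis that some maximal minor is a unit to bring each map to the normal form $\bigl[\begin{smallmatrix} \bbid_r & 0 \\ 0 & 0 \end{smallmatrix}\bigr]$, and then read off kernels and images as free direct summands whose ranks must agree. One point to tighten: the blanket claim that ``ranks are preserved under localization'' is not true in general --- when $A$ has zerodivisors, an $r\times r$ minor that is nonzero in $A$ can die in $A_{\mathfrak p}$. Here the rank \emph{is} preserved, but precisely because $I(f)=I(g)=A$: a unit $r\times r$ minor stays a unit in every localization, and $(r{+}1)\times(r{+}1)$ minors, being zero, remain zero, so $\rank f_{\mathfrak p} = \rank f$. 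Your argument uses this implicitly; stating it explicitly makes clear that the hypothesis $I(f)=I(g)=A$ is exactly what makes the reduction to the local case (and hence the whole argument) go through.
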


\begin{lemma}[Sylvester's Rank Inequality]\label{sylrankineq}
	If $U$ and $V$ are matrices with entries in a field, where $U$
        is $r \times s$ and $V$ is $s \times t$, then
	\[ 
	\rank U + \rank V - s \leq \rank UV.
	\] 
\end{lemma}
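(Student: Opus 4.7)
The plan is to prove Sylvester's inequality via a direct rank-nullity argument on the composition $UV$, viewing $U,V$ as linear maps between coordinate vector spaces over the base field.

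First, I would reframe the statement in terms of linear maps: write $V : F^{t} \to F^{s}$ and $U : F^{s} \to F^{r}$, so that $UV : F^{t} \to F^{r}$ and $\rank UV = \dim \Ima(UV) = \dim U(\Ima V)$. The composition $UV$ factors through $\Ima V$, so understanding $\rank UV$ reduces to understanding how much $U$ collapses the subspace $\Ima V \subseteq F^{s}$.

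Next, I would apply the rank-nullity theorem to the restriction $U|_{\Ima V} : \Ima V \to F^{r}$. This yields
\[
\dim \Ima V \;=\; \dim U(\Ima V) + \dim\bigl(\ker U \cap \Ima V\bigr),
\]
so
\[
\rank UV \;=\; \rank V - \dim\bigl(\ker U \cap \Ima V\bigr).
\]
Since $\ker U \cap \Ima V \subseteq \ker U$, we have $\dim(\ker U \cap \Ima V) \leq \dim \ker U$, and rank-nullity applied to $U$ itself gives $\dim \ker U = s - \rank U$. Substituting yields
\[
\rank UV \;\geq\; \rank V - (s - \rank U) \;=\; \rank U + \rank V - s,
\]
which is the desired inequality.

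There is no real obstacle here: the result is classical and the proof is a two-line application of rank-nullity. The only thing to be mildly careful about is that ``rank'' in Definition~\ref{def:rank} is defined via the largest nonvanishing minor, so I would note at the outset that over a field this coincides with the usual dimension of the image, making the linear-algebraic argument above applicable.
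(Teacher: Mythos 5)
Your proof is correct: the rank-nullity argument applied to the restriction $U|_{\Ima V}$ is the standard derivation of Sylvester's inequality, and your opening remark that over a field the determinantal rank of Definition~\ref{def:rank} agrees with the dimension of the image is the right bridge to the linear-algebraic setting. The paper itself states this lemma without proof as a classical fact, so there is no competing approach to compare against; your argument simply supplies the omitted justification cleanly.
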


By Lemma~\ref{exactlem}, it is important to know the ranks of the
differentials of  $\calF_\bullet(J)$. Due to the block structure, 
we must first address the matrices $\varphi_i$. 

\begin{proposition}\label{phirank}
	The rank of $\Phi_d$ is $d-1$ for all $d \geq 2$ and the rank $\varphi_i$ is $(n-3)^i$ for all $i \geq 0$.
\end{proposition}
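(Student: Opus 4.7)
The plan is to leverage the toric structure of $R$: since $R$ is a domain, the rank of any matrix over $R$ equals its rank over the fraction field, and two monomials of $R$ agree if and only if their $\calA$-weights coincide. Writing the two rows of $\varphi_0$ as $T = (x_2, \ldots, x_m, x_{m+2}, \ldots, x_n)$ and $B = (-x_1, \ldots, -x_{m-1}, -x_{m+1}, \ldots, -x_{n-1})$, I will use throughout the identity that $cT + c'B = 0$ in $R$ whenever $c, c' \in R$ are monomials whose $\calA$-weights differ by $(0,0,1)$: entry by entry this sum is a binomial $c x_j - c' x_{j-1}$ whose two terms always carry the same $\calA$-weight. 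In particular every $2 \times 2$ minor of $\varphi_0$ is (up to sign) a $2 \times 2$ minor of $M$, whence $\rank \varphi_0 = 1$.

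For $\Phi_d$, the upper bound $\rank \Phi_d \le d-1$ follows from the row dependency $\sum_{i=1}^d c_i R_i = 0$, where $c_i \in R$ is any monomial of $\calA$-weight $(1,1,i-1)$ (for instance $c_i = x_1 x_{m+i}$ when $i \le p$); the $i$th column-block of $\Phi_d$ then receives the contribution $c_i T + c_{i+1} B$, which vanishes by the identity above. For the lower bound I take rows $1, \ldots, d-1$ together with the first column of each of the $d-1$ blocks: the resulting submatrix is lower bidiagonal with $x_2$ on the diagonal and $-x_1$ on the subdiagonal, so its determinant is $x_2^{d-1} \ne 0$ in $R$.

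The rank of $\varphi_1$ requires a similar but more delicate analysis. For the upper bound I verify block by block that $\sum_{i=1}^{n-2} c_i R_i = 0$ in $\varphi_1$, where $c_i = x_i$ for $1 \le i \le m-1$ and $c_i = x_{i+1}$ for $m \le i \le n-2$; the $\Phi_{m-1}$- and $\Phi_{p-1}$-column contributions reduce immediately to the identity $x_\ell T + x_{\ell+1} B = 0$ (valid for $\ell = 1, \ldots, m-1$ and $\ell = m+1, \ldots, n-1$), while each middle column collapses to a pair of equal-weight monomials such as $x_{m-1} x_{m+j+2} = x_m x_{m+j+1}$ in $R$. For the lower bound I choose the $(n-3) \times (n-3)$ submatrix on rows $1, \ldots, n-3$ together with the first column of each $\Phi_{m-1}$-block, the first column of the middle block, and the first column of each $\Phi_{p-1}$-block; cofactor expansion along the middle column (whose only nonzero entries are $x_{m+1}$ in row $1$ and $-x_1$ in row $m$) collapses the determinant to the monomial $x_{m+1}\, x_1^{m-2}\, x_2^{p-2}$, which is nonzero in $R$.

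Finally, $\rank \varphi_2 = (n-3)^2$ is obtained from two ingredients. The upper bound uses Proposition~\ref{FJcomplex} (so that $\varphi_1 \varphi_2 = 0$) together with Lemma~\ref{sylrankineq}: $\rank \varphi_2 \le (n-2)(n-3) - \rank \varphi_1 = (n-3)^2$. For the lower bound I select $n-3$ linearly independent rows from each of the $m-2$ copies of $\varphi_1$ in the upper-left block of $\varphi_2$, $n-3$ linearly independent rows from $-\Phi_{n-2}$ in the middle, and $n-3$ linearly independent rows from each of the $p-2$ copies of $\varphi_1$ in the lower-right block. The independence of these $(n-3)^2$ combined rows follows in three stages: any nontrivial $R$-linear combination must vanish on the left columns of $\varphi_2$ (forcing the top coefficients to zero by the direct-sum structure of $\bigoplus_{m-2} \varphi_1$), then on the right columns (forcing the bottom coefficients to zero), and finally on the middle columns (forcing the middle coefficients to zero by independence in $-\Phi_{n-2}$). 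For $i \ge 3$, $\varphi_i$ is a genuine block-diagonal direct sum of smaller $\varphi$'s, so ranks add and induction yields $\rank \varphi_i = (m-2+p-2)(n-3)^{i-1} + (n-3)(n-3)^{i-2} = (n-3)^i$. The main obstacle is the careful middle-column verification of the $\varphi_1$ row dependency; the remainder is block-structural bookkeeping.
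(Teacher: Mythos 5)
Your proof is correct, and while it shares the paper's overall strategy of squeezing each rank between matching upper and lower bounds over the fraction field of the domain $R$, it substitutes a genuinely different mechanism for the upper bounds. The paper bounds $\rank \Phi_d$, $\rank \varphi_1$, and $\rank \varphi_2$ from above by applying Sylvester's rank inequality (Lemma~\ref{sylrankineq}) to the complex relations $\varphi_0 \circ \varphi_1 = 0$, $\varphi_1 \circ \varphi_2 = 0$, and $\Phi_d \circ \bigoplus_{d-1}\varphi_1 = 0$. You instead exhibit explicit $R$-linear dependencies among the rows of $\Phi_d$ and $\varphi_1$, justified by the $\calA$-grading identity $cT + c'B = 0$ whenever $\mathrm{wt}(c') - \mathrm{wt}(c) = (0,0,1)$. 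This is a nice trade: your bounds for $\Phi_d$ and $\varphi_1$ are self-contained (you do not need to know $\rank \varphi_1$ in advance to bound $\rank \Phi_d$, nor do you need $\varphi_0\varphi_1 = 0$), and the dependency makes the degeneracy of the blocks structurally transparent; the cost is the middle-column verification for $\varphi_1$, which you correctly flag as the delicate step and which reduces to equal-weight monomial identities such as $x_{m-1}x_{m+j+2} = x_m x_{m+j+1}$. You still use Sylvester for the upper bound on $\rank\varphi_2$, as the paper does. On the lower-bound side your arguments are in the same spirit as the paper's (produce a nonvanishing maximal minor), though the explicit submatrices chosen differ (e.g.\ your $\Phi_d$ minor evaluates to $x_2^{d-1}$, theirs to $\pm x_1^{d-1}$), and for $\varphi_2$ you replace the paper's explicit minor with a cleaner three-stage independence argument exploiting the disjoint column supports of the $\bigoplus\varphi_1$ blocks and $-\Phi_{n-2}$. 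You are also more careful than the paper's wording in separating the non-block-diagonal case $i=2$ from the honest direct sum $i \ge 3$.
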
 
\begin{proof}
	Because $R$ is a domain, 
        dependences among rows of a matrix over $R$ can be read off
        from the vanishing of minors. In fact, the rank of a
        matrix over
        $R$ equals the rank of that matrix over the field of fractions
        of $R$. In this proof, we work over the field of fractions of
        $R$, which gives us access to Lemma~\ref{sylrankineq}.
	
	It is clear that $\rank \varphi_0 = 1$, as all $2 \times 2$
        minors of $\varphi_0$ are exactly the same as the minors of
        $M$, which belong to $I_2(M)$.  
	
	Next we must show that $\rank \varphi_1 = n-3$. We know that
        the rank of $\varphi_1$ is at least $n-3$, as the
        minor of size $n-3$ corresponding to rows $2, 3, \ldots, n-2$ and columns $1, 1 +
        (n-2), 1 + 2(n-2), \ldots, 1 + (n-4)(n-2)$ equals
        $(-1)^{n-3}x_1 \neq 0$. On the other hand, by
        Lemma~\ref{sylrankineq}, $\rank \varphi_0 + \rank \varphi_1 -
        (n-2) 
        \leq \rank (\varphi_0 \circ \varphi_1) = 0$, so $\rank
        \varphi_1 \leq n-3$. Consequently $\rank \varphi_1 = n-3$.   
	
	To compute $\rank \varphi_2$, we consider the minor of size $(n-3)^2$ 
        corresponding to rows $\{s+t(n-2) \, | \, 2 \leq s \leq n-2, 0 \leq t \leq n-4\}$ and columns $1,
        (n-2)+1, 2(n-2)+1, \ldots, ((n-3)^2-1)(n-2)+1$ which equals
        $x_1^{(n-3)^2} \neq 0$, so that $\rank
        \varphi_2 \geq (n-3)^2$. Again by Lemma~\ref{sylrankineq} and
        because $\varphi_1 \circ \varphi_2 = 0$, we know that $\rank
        \varphi_2 \leq (n-2)(n-3) - \rank \varphi_1 = (n-2)(n-3) -
        (n-3) = (n-3)^2$. We conclude that $\rank \varphi_2 = (n-3)^2$.  
	
	The rank computations for the remaining maps $\varphi_i$ follow
        easily from the block structure:
$\rank \varphi_i = (m-2)(n-3)^{i-1} + (n-3)(n-3)^{i-2}+ (p-2)(n-3)^{i-1} = (n-3)^i$ for any $i \geq 2$.  
	
	In the case of $\Phi_d$, we consider the minor corresponding
        to rows $2, 3, \ldots, d$ and columns $1, 1 + (n-2), 1 +
        2(n-2), \ldots, 1 + (d-2)(n-2)$ which equals
        $(-1)^{d-1}x_1^{d-1}$, so that $\rank \Phi_d \geq d-1$. On the
        other hand, because $\varphi_0 \circ \varphi_1 = 0$, $\Phi_d
        \circ \bigoplus\limits_{d-1} \varphi_1 = 0$. Therefore $\rank
        \Phi_d \leq dp-1)(n-2) - (d-1) \rank \varphi_1 = d-1$, and in
        fact $\rank \Phi_d = d-1$.  
\end{proof}

The ranks of the differentials of $\calF_\bullet(J)$ can be computed directly from Proposition~\ref{phirank}. 

\begin{corollary}\label{rankdelj}
	The ranks of the $\del_{J,i}$ are:
	\begin{enumerate}
		\item $\rank \del_{J,1} = n-2$ 
		\item $\rank \del_{J,2} = (n-2)(n-3)$
		\item $\rank \del_{J,i} = (n-2)(n-3)^{i-1}$ for all $i
                  \geq 2$ $\qed$
	\end{enumerate}
\end{corollary}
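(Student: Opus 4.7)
The plan is essentially to read off each rank directly from the explicit formulas for $\del_{J,i}$ together with Proposition~\ref{phirank}. There is no substantive new work to do.

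For the first item, recall that $\del_{J,1} = \Phi_{n-1}$. Proposition~\ref{phirank} tells us $\rank \Phi_d = d-1$, so applying this with $d = n-1$ gives $\rank \del_{J,1} = n-2$.

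For the remaining items, $\del_{J,i} = \varphi_{i-1}^{\oplus(n-2)}$ for $i \geq 2$. Since this is a block-diagonal matrix (direct sum of $n-2$ copies of $\varphi_{i-1}$), its rank is simply the sum of the ranks of the blocks: a collection of maximal linearly independent rows in each block remains linearly independent in the direct sum, since the nonzero entries of different blocks occupy disjoint sets of columns. Hence
\[
\rank \del_{J,i} \;=\; (n-2)\cdot \rank \varphi_{i-1} \;=\; (n-2)(n-3)^{i-1},
\]
where the second equality uses Proposition~\ref{phirank}. Specializing to $i = 2$ gives the second bullet, $\rank \del_{J,2} = (n-2)(n-3)$, and the general case $i \geq 2$ gives the third bullet.

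There is no real obstacle here: the corollary is an immediate bookkeeping consequence of Proposition~\ref{phirank} combined with the observation that rank is additive on direct sums of matrices. The only thing to verify is this last observation, which we have just done.
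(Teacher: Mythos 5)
Your proof is correct and follows exactly the intended argument: the paper leaves this corollary as an immediate consequence of Proposition~\ref{phirank}, and you have simply spelled out the substitution $\rank\Phi_{n-1}=n-2$ and the additivity of rank over the block-diagonal direct sum $\varphi_{i-1}^{\oplus(n-2)}$.
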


In order to apply Lemma~\ref{exactlem}, we need more information
regarding the ideals of maximal nonvanishing minors of the matrices involved.

\begin{proposition}\label{phiminors}
	For $i \in [n]$, 
        we have $x_i^{p-1} \in I(\Phi_p)$ and  $x_i^{(n-3)^{j}} \in
        I(\varphi_j)$  for all $j \geq 0$.
\end{proposition}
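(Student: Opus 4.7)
The plan is to exhibit explicit maximal-size minors whose determinants equal $\pm x_i$ to the desired power, inducting on $j$ for the $\varphi_j$ claim. I would prove the $\Phi_p$ claim first and use it as an ingredient in the induction.

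For $\Phi_p$, I would generalize the minor computation from the proof of Proposition~\ref{phirank}. For each column $c \in \{1, \ldots, n-2\}$ of $\varphi_0$, taking the same column $c$ from each of the $p-1$ staircase blocks of $\Phi_p$ together with either rows $\{2, 3, \ldots, p\}$ or rows $\{1, 2, \ldots, p-1\}$ yields a $(p-1) \times (p-1)$ triangular submatrix whose diagonal repeats a single entry of $\varphi_0$'s column $c$. The bottom entries of $\varphi_0$'s columns run through $\{-x_1, \ldots, -x_{m-1}, -x_{m+1}, \ldots, -x_{n-1}\}$ and the top entries through $\{x_2, \ldots, x_m, x_{m+2}, \ldots, x_n\}$, so letting $c$ and the choice of omitted row vary sweeps out a minor $\pm x_i^{p-1}$ for every $i \in [n]$. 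A useful refinement, needed in the $\varphi_1$ case, is that picking different columns $c_k$ in different blocks of the staircase gives a triangular minor with determinant $\pm \prod_k x_{c_k}$ (or its top-entry analog).

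For $\varphi_j$, I would induct on $j$ with base cases $j = 0, 1, 2$. Case $j = 0$ is immediate since every $x_i$ appears as an entry of the rank-one matrix $\varphi_0$. For $j \geq 3$, the decomposition $\varphi_j = \varphi_{j-1}^{\oplus(m-2)} \oplus \varphi_{j-2}^{\oplus(n-3)} \oplus \varphi_{j-1}^{\oplus(p-2)}$ is a genuine direct sum, so concatenating the maximal nonvanishing minors of each summand (furnished by the inductive hypothesis and equal to $\pm x_i^{(n-3)^{j-1}}$ or $\pm x_i^{(n-3)^{j-2}}$) yields a block diagonal $(n-3)^j \times (n-3)^j$ submatrix with determinant $\pm x_i^{(m-2)(n-3)^{j-1} + (n-3)^{j-1} + (p-2)(n-3)^{j-1}} = \pm x_i^{(n-3)^j}$. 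Case $j = 2$ is parallel: the essential structure of $\varphi_2$ is a direct sum of $(m-2) + (p-2)$ copies of $\varphi_1$ and one copy of $-\Phi_{n-2}$, with the auxiliary $u_i, v_i$ columns safely ignored, and the maximal minors of these summands (from the $j = 1$ case and the $\Phi_p$ result applied to $d = n-2$) combine to give $\pm x_i^{(n-3)^2}$.

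The crux of the argument, and the main obstacle, is the $j = 1$ base case. The staircase argument inside $\Phi_{m-1}$ and $\Phi_{p-1}$ alone reaches only exponent $(m-2) + (p-2) = n - 4$, so the missing factor of $x_i$ must come from a single well-chosen column of the middle block paired with one extra row, and this requires a case analysis on $i$. For $i \in \{1, \ldots, m-1\}$, I would combine the $\Phi_{m-1}$ and $\Phi_{p-1}$ staircases via their ``omit row 1'' variants with column $c = i$, the extra row $m$, and middle column $i$; the resulting submatrix is upper triangular with $-x_i$ on its diagonal, with any middle-block interference from $x_{m+1}$ lying above the diagonal. The mirror construction (``omit row $m-1$'' and ``omit row $p-1$'' with $c = i - 2$, extra row $m - 1$, middle column $i - 2$) handles $i \in \{m+2, \ldots, n\}$, producing a lower triangular submatrix with $x_i$ on its diagonal and any $-x_m$ interference below the diagonal. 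For the bridging variables $i = m$ and $i = m+1$, the middle block supplies extra structure by itself --- the $(m-1) \times (m-1)$ identity $x_{m+1} \bbid_{m-1}$ in its top half and a $(p-1) \times (p-1)$ diagonal of $-x_m$'s in its bottom half --- each of which pairs with a matching $\Phi_{p-1}$ or $\Phi_{m-1}$ staircase minor to form a block diagonal $(n-3) \times (n-3)$ submatrix with determinant $\pm x_{m+1}^{n-3}$ or $\pm x_m^{n-3}$.
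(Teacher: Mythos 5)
Your proof proposal is correct and takes essentially the same approach as the paper: the same "staircase" minors for $\Phi_d$, the same case analysis on $i$ (split into $i\leq m-1$, $i=m$, $i=m+1$, $i\geq m+2$) for the crucial $\varphi_1$ base case, and the same block-diagonal lifting to $\varphi_2$ and beyond, with the paper merely recording the resulting row/column selections in explicit tables rather than describing them conceptually. Your observation that the $u_i,v_i$ entries and the middle-block $x_{m+1}$ or $-x_m$ interference land strictly on one side of the diagonal (and hence never affect the chosen minors) is exactly the content the paper encodes implicitly through its tables of indices.
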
 
\begin{proof}
	Because we are considering the ideals generated by the minors, we can 	  ignore signs in our
        computations. It is clear that $x_i \in I(\varphi_0)$ for all
        $i \in [n]$. To see that any $x_i^{n-3} \in I(\varphi_1)$ for
        any $i \in [n]$, we can consider the minors corresponding to the rows $r_{i,1}$ and columns $c_{i,1}$ listed below. 
	\[ 
	\begin{tabular}{l|p{1.5in}|p{3in}}
	$x_i^{n-3} \in I(\varphi_1)$ & rows $r_{i,1}$ & columns $c_{i,1}$ \\
	\hline 
	\hline 
	$1 \leq i \leq m-1$ & $2, 3, \ldots, n-2$ & $i + j(n-2)$ for $0 \leq j \leq n-4$ \\
	\hline 
	$i = m$ & $1, 2, \ldots, m-2, \newline m, \ldots, n-2$ & $(m-1) + j(n-2)$ for $0 \leq j \leq m-3$ \newline $\ell + (m-2)(n-2)$ for $m \leq \ell \leq n-2$ \\ 
	\hline 
	$i = m+1$ & $1,2, \ldots, m-1, \newline m+1, \ldots, n-2$ & $\ell+(m-2)(n-2)$ for $1 \leq \ell \leq m-1$ \newline $m+j(n-2)$ for $m-1 \leq j \leq n-4$\\ 
	\hline 
	$m+2 \leq i \leq n$ & $1, 2, \ldots, n-3$ & $i-2 + j(n-2)$ for $0 \leq j \leq n-4$ \\
	\end{tabular}
	\]
	
	The proposed submatrices of $\varphi_1$ whose rows and columns
        listed above are strictly triangular, so the minors are easily
        computed.  
	
	We can make a similar table with recipes for the appropriate
        minors in $\varphi_2$, given below. 
	\[ 
	\begin{tabular}{l|p{1.5in}|p{3.5in}}
	$x_i^{(n-3)^2} \in I(\varphi_2)$ & rows $r_{i,2}$ & columns $c_{i,2}$\\
	\hline 
	\hline 
	$1 \leq i \leq m-1$ & $s + t(n-2)$ for $s \in r_{i,1}$ and $0 \leq t \leq n-4$ & $i + j(n-2)$ for $0 \leq j \leq (n-4)(n-2)$ \\
	\hline 
	$i = m$ & $s + t(n-2)$ for $s \in r_{i,1}$ and $0 \leq t \leq n-4$ \newline $\backslash (m-1)(n-2)$ \newline $\cup \, (m-2)(n-1)+1$ & $a+j(n-2)(n-3)$ for $a \in c_{i,1}$ and $0 \leq j \leq (n-4)$ but $j \neq m-2$ \newline $(m-1) + \ell(n-2) + (m-2)(n-2)(n-3)$ for $0 \leq \ell \leq n-4$\\ 
	\hline 
	$i = m+1$ & $s + t(n-2)$ for $s \in r_{i,1}$ and $0 \leq t \leq n-4$ \newline $\backslash (m-2)(n-2)+1$ \newline $\cup \, m+(m-2)(n-2)$ & $a+j(n-2)(n-3)$ for $a \in c_{i,1}$ and $0 \leq j \leq (n-4)$ but $j \neq m-2$ \newline $m + \ell(n-2) + (m-2)(n-2)(n-3)$ for $0 \leq \ell \leq n-4$ \\ 
	\hline 
	$m+2 \leq i \leq n$ & $s +t(n-2)$ for $s \in r_{i,1}$ and $0 \leq t \leq n-4$ & $(i-2) + j(n-2)$ for $0 \leq j \leq (n-4)(n-2)$
	\end{tabular} 
	\]
	
	The block structures of the successive $\varphi_j$'s combined with the previous two statements is enough to see that $x_i^{(n-3)^j} \in I(\varphi_j)$. 
	
	Finally, use the minors whose columns and rows are given below to obtain $x_i^{d-1} \in I(\Phi_d)$. 
	
	\[ 
	\begin{tabular}{l|p{1.5in}|p{3.5in}}
	$x_i^{d-1} \in I(\Phi_d)$ & rows & columns\\
	\hline 
	\hline 
	$1 \leq i \leq m-1$ & $2, \ldots, d$ & $i + j(n-2)$ for $0 \leq j \leq d-2$ \\
	\hline 
	$i=m$ & $1, \ldots, d-1$ & $m-1 +j(n-2)$ for $0 \leq j \leq d-2$\\ 
	\hline 
	$i=m+1$ & $2, \ldots, d$ & $m + j(n-2)$ for $0 \leq j \leq d-2$\\
	\hline 
	$m+2 \leq i \leq n$ & $1, \ldots, d-1$ & $i-2 + j(n-2)$ for $0 \leq j \leq d-2$
	\end{tabular} 
	\]
\end{proof}

We are now ready to give the main result in this subsection.

\begin{theorem} 
	The complexes $\calF_\bullet(J)$ , $\calF_\bullet(I_1)$ and $\calF_\bullet(I_2)$ are exact.
\end{theorem}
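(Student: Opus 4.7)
The plan is to apply Lemma~\ref{exactlem} at each spot of $\calF_\bullet(J)$; since all three complexes share the same block architecture built from the matrices $\varphi_j$ and $\Phi_d$, the arguments for $\calF_\bullet(I_1)$ and $\calF_\bullet(I_2)$ will then follow by the identical pattern with $\Phi_{n-1}$ replaced by $\Phi_m$ or $\Phi_p$ and with multiplicity $n-2$ replaced by $m-1$ or $p-1$. Lemma~\ref{exactlem} requires a rank equality and a condition on the ideals of maximal minors; both are already prepared by Proposition~\ref{phirank}, Corollary~\ref{rankdelj}, and Proposition~\ref{phiminors}.

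For the rank condition, I would verify separately the two initial positions, namely $\rank \del_{J,0} + \rank \del_{J,1} = 1 + (n-2) = n-1$ and $\rank \del_{J,1} + \rank \del_{J,2} = (n-2) + (n-2)(n-3) = (n-2)^2$, and the stable recursion
\[
\rank \del_{J,i} + \rank \del_{J,i+1} = (n-2)(n-3)^{i-1} + (n-2)(n-3)^i = (n-2)^2(n-3)^{i-1}
\]
for $i \geq 2$; each sum matches the rank of the corresponding intervening free module of $\calF_\bullet(J)$. For the ideal condition, I would invoke the standard fact that the ideal of maximal minors of a block-diagonal matrix contains all products of maximal minors of its blocks. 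Combined with Proposition~\ref{phiminors}, which places a positive power of every variable into both $I(\varphi_j)$ and $I(\Phi_{n-1})$, this propagates to show that for every variable $x_\ell$ some positive power of $x_\ell$ lies in $I(\del_{J,i})$, regardless of $i$. Since $R$ is a domain, any such power is a non-zero-divisor, delivering the ideal hypothesis of Lemma~\ref{exactlem}.

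Assembling these two ingredients at every homological degree yields exactness of $\calF_\bullet(J)$ at each free position; exactness at $J$ itself is by construction, since the entries of $\del_{J,0}$ are precisely the listed generators of $J$. The main obstacle I anticipate is the bookkeeping around the two initial positions — where $\del_{J,1} = \Phi_{n-1}$ rather than a direct sum of $\varphi$'s — and, on the ideal side, carefully tracking that the block-diagonal argument propagates a power of \emph{every} variable (not merely of some) into each $I(\del_{J,i})$. Neither step requires any machinery beyond what has already been assembled, and the verbatim adaptation to $\calF_\bullet(I_1)$ and $\calF_\bullet(I_2)$ then completes the argument.
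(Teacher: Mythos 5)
There is a genuine gap in your ideal-condition step. Lemma~\ref{exactlem} requires $I(f)=I(g)=A$, i.e.\ that the ideal of maximal minors be the \emph{entire ring}. You write that Proposition~\ref{phiminors} puts a power of every variable into $I(\del_{J,i})$, and then conclude that since $R$ is a domain, the presence of a non-zero-divisor ``delivers the ideal hypothesis.'' But it does not: every entry of every $\del_{J,i}$ lies in $\frakm=\langle x_1,\ldots,x_n\rangle$, so every minor, and hence $I(\del_{J,i})$ itself, lies in $\frakm$. In particular $I(\del_{J,i})\neq R$, and Lemma~\ref{exactlem} simply cannot be applied globally over $R$. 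Containing a non-zero-divisor is a depth-one condition, which is much weaker than being the unit ideal, and it is not what the cited lemma asks for.

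The paper gets around this by localizing. At a prime $\frakq\neq\frakm$, some variable $x_\ell$ is a unit, and since Proposition~\ref{phiminors} shows a power of $x_\ell$ lies in each $I(\del_{J,i})$, one has $I(\del_{J,i,\frakq})=R_\frakq$; Lemma~\ref{exactlem} then applies and gives exactness after localization at $\frakq$. This forces any nonzero homology of $\calF_\bullet(J)$ to be supported only at $\frakm$, hence of depth zero. The remaining case at $\frakm$ is handled by a different argument entirely: since $R$ is a normal semigroup ring it is Cohen--Macaulay (of dimension $3$), so the free modules $F_i$ over $R_\frakm$ have depth $3$, their submodules of cycles and boundaries have depth $\geq 3$, and the depth estimate $\depth(H_i)\geq\min\{\depth(C_i),\depth(B_i)-1\}\geq 2$ contradicts depth zero. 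Your proposal is missing both the localization step and this Cohen--Macaulay/depth argument, and without them the conclusion does not follow from the tools you cite. (Your rank calculations and the block-minor observation are fine and do appear in the paper; what is missing is the bridge from those facts to exactness at the maximal ideal.)
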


\begin{proof}
       We only provide details for $\calF_\bullet(J)$. We
       show that we have exactness after localizing at any prime
       ideal of $R$, from which exactness over $R$ follows. If
       $\frakq$ is any prime ideal in $R$, we denote by
       $\del_{J,i,\frakq}$ the localized map induced by
       $\del_{J,i}$. The (unique) graded maximal ideal of $R$ is
       $\frakm =\langle x_1, \ldots, x_n \rangle$.

	Corollary~\ref{rankdelj} provides the ranks of the maps
        $\del_{J,i}$ over $R$. 
	Because $R$ is a domain, $I(\del_{J,i})$ contains exclusively
        non-zero divisors for all $i$. This means that, when
        localizing, the rank of $\del_{J,i}$ does not change. Furthermore,
        localization at any prime ideal $\frakq \neq \frakm$ yields
        $I(\del_{J,i,\frakq}) = R_\frakq$, because each $I(\del_{J,i})$ contains some power
        of every $x_\ell$, by Proposition~\ref{phiminors}. 
	By Lemma~\ref{exactlem}, this proves that $\calF_{\bullet}(J)$
        is exact after localization at any prime ideal $\frakq \neq
        \frakm$. 

We conclude that, if $\calF_{\bullet}(J)$ has a nonzero homology
module, it is only supported at the graded maximal ideal $\frakm$, and
therefore has depth $0$. 
Our goal now is to derive a contradiction. 
	
	We localize at $\frakm$, and use $F_i$ to denote the free
        $R_\frakm$-modules appearing in the localization of $\calF_\bullet(J)$.
        Use $B_i \subseteq C_i \subseteq F_i$
        to denote the $i$-cycles and $i$-boundaries, and $H_i=C_i/B_i$. The ring $R$ is a
        semigroup ring corresponding to a saturated (normal)
        semigroup, and is therefore Cohen--Macaulay by Hochster's theorem.
        Since $\dim R=3$, it follows that $R_\frakm$ has depth $3$.
        Consequently all the free modules over $R_\frakm$ also have depth
        $3$, in particular the $F_i$.
        Any submodules of the free modules $F_i$ must have depth at least $3$, so
        we have $\depth(C_i) \geq 3$ and
        $\depth(B_i) \geq 3$. From the exact sequence  
	\[
	0 \rightarrow B_i \rightarrow C_i \rightarrow H_i \rightarrow 0
	\]
	it follows that $\depth(H_i) \geq \min \{ \depth(C_i),
        \depth(B_i) - 1 \}$ (see~\cite[Corollary~18.6.a]{ecawav}), so that
        $\depth(H_i) \geq 2$. This contradicts that $\depth(H_i) = 0$. 
	
	Therefore localizations of $\calF_\bullet(J)$ at all prime
        ideals (now including $\frakm$) are exact, and consequently
        $\calF_{\bullet}$ is exact. 
\end{proof}

\subsection{The Mapping Cone}
\label{subsec:mappingCone}

We recall that we have an exact sequence
\[ 
0 \rightarrow J \xrightarrow{ \begin{bmatrix} \phantom{-}1 \\-1 \end{bmatrix} } I_1 \oplus I_2 \xrightarrow{ \begin{bmatrix} 1 & 1 \end{bmatrix} } \frakm \rightarrow 0. 
\] 
The relevant result for us is that, if we have resolutions of $J$ and $I_1
\oplus I_2$, the inclusion $J \to I_1 \oplus I_2$ can be lifted to a
map of complexes between the corresponding resolutions, and
the associated mapping cone is a resolution of $\frakm$.
The definition of the mapping cone of a map of complexes is given below; we
refer to the appendix of~\cite{ecawav} for more information. 

\begin{definition}
	If $\alpha: F \rightarrow G$ is a map of complexes, and we write $f,g$ for the differentials of $F$ and $G$ respectively, then the \emph{mapping cone} $M(\alpha)$ of $\alpha$ is the complex such that $M(\alpha)_{i+1} = F_i \oplus G_{i+1}$ where the differential $\del_{i+1}$ is shown: 
	\[ 
	\xymatrixrowsep{.5pc}
	\xymatrix{
		F_{i} \ar[r]^{-f_i} \ar[rdd]^{\alpha_i} & F_{i-1} \\
		\bigoplus & \bigoplus \\
		G_{i+1} \ar[r]^{g_{i+1}} & G_{i}
	}
	\]
	that is, $\del_{i+1}(a,b) = (-f_{i}(a), g_{i+1}(b) + \alpha_{i}(a))$. 
\end{definition}

We now construct the map of complexes that lifts the inclusion $J \to I_1 \oplus I_2$.

\begin{proposition}
	The map of complexes $\alpha: \calF_{\bullet}(J) \rightarrow \calF_{\bullet}(I_1) \oplus \calF_{\bullet}(I_2)$ is given by 
	\[ 
	\begin{aligned}
	\alpha_0 &= \left[ \begin{array}{cc}
	\begin{matrix} 
	x_{m+1} & 0 & \ldots & 0 & \multicolumn{3}{c}{\multirow{4}{*}{$\mathbf{0}$ }} \\ 
	0 & x_{m+1} & \ldots & 0 \\ 
	0 & \ddots & \ddots & 0 \\ 
	0 & 0 & \ldots & x_{m+1} &
	x_{m+2} & x_{m+3} & \ldots & x_n 
	\end{matrix}\\  
	\hline
	\begin{matrix} 
	-x_1 & -x_2 & \ldots & -x_{m} & 0 & 0 & \cdots & 0 \\ 
	\multicolumn{4}{c}{\multirow{3}{*}{$\mathbf{0}$ }} & -x_m & 0 & \cdots & 0 \\ 
	& & & & 0 & -x_m & \cdots & 0 \\ 
	& & & & 0 & \ddots & \ddots & 0 \\ 
	& & & & 0 & 0 & \cdots & -x_m
	\end{matrix}
	\end{array} \right] \in R^{n \times (n-1)} \\
	\alpha_i &= 
	\left[ \begin{array}{cc}
	x_{m+1}*\bbid_{(m-1)(n-2)(n-3)^{i-1}} & 0 \\ 
	0 & -x_m*\bbid_{(p-1)(n-2)(n-3)^{i-1}} 
	\end{array}
	\right] \in R^{(n-2)^2(n-3)^{i-1} \times (n-2)^2(n-3)^{i-1}} \textrm{ for } i \geq 1
	\end{aligned}
	\]
\end{proposition}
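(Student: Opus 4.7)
The plan is to verify two statements: that $\alpha_0$ lifts the inclusion $J \hookrightarrow I_1 \oplus I_2$ at the augmentation level, and that the chain-map squares $(\del_{I_1,i} \oplus \del_{I_2,i}) \alpha_i = \alpha_{i-1} \del_{J,i}$ commute for each $i \geq 1$.

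For the augmentation I would inspect $\alpha_0$ column by column against the generators of $J$. Column $j$ with $j \leq m$ has $x_{m+1} e_j$ in the $F_{I_1,0}$-block and $-x_j e_1'$ in the $F_{I_2,0}$-block, which $(\del_{I_1,0} \oplus \del_{I_2,0})$ sends to $(x_j x_{m+1},\, -x_j x_{m+1})$; column $m+k$ (for $k = 1, \ldots, p-1$) has $x_{m+k+1} e_m$ and $-x_m e_{k+1}'$, mapping to $(x_m x_{m+k+1},\, -x_m x_{m+k+1})$. Both match the embedding $J \to I_1 \oplus I_2$, $x \mapsto (x, -x)$. For degrees $i \geq 2$, $\alpha_i$ and $\alpha_{i-1}$ are diagonal scalar matrices with entries $x_{m+1}$ and $-x_m$, while $\del_{J,i} = \varphi_{i-1}^{\oplus(n-2)}$ splits as $\varphi_{i-1}^{\oplus(m-1)} \oplus \varphi_{i-1}^{\oplus(p-1)} = \del_{I_1,i} \oplus \del_{I_2,i}$ along the same block decomposition, so commutativity reduces to the tautology that scalars commute with matrices.

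The substantive step is the degree-one identity $\alpha_0 \Phi_{n-1} = (\Phi_m \oplus \Phi_p) \alpha_1$. I would partition the columns of $\Phi_{n-1}$ by its $n-2$ embedded $\varphi_0$-blocks, indexed by $K = 1, \ldots, n-2$. For $K \leq m-1$, only $\alpha_0$'s columns $K, K+1$ contribute; the output equals $x_{m+1}$ times the corresponding column of $\Phi_m$ (in $F_{I_1,0}$) plus a cross-term of the shape $-(x_K \varphi_0[1,l] + x_{K+1} \varphi_0[2,l]) e_1'$ in $F_{I_2,0}$. This cross-term is a first-block or mixed $2 \times 2$ minor of $M$ according to whether $l \leq m-1$ or $l \geq m$, hence lies in $I_2(M)$ and vanishes in $R$. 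For $K \geq m+1$ the situation is symmetric: a cross-term appears in $F_{I_1,0}$ at row $e_m$ and is again an $I_2(M)$-minor. The middle block $K = m$ combines both phenomena: the row-$m$ coefficient $x_{m+1}\varphi_0[1,l] + x_{m+2}\varphi_0[2,l]$ is an $I_2(M)$-minor and vanishes, while the remaining $F_{I_2,0}$-contribution at positions $1$ and $2$ coincides with $-x_m$ times $\Phi_p$'s first block column $l$.

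The main obstacle is the index bookkeeping in the degree-one calculation; conceptually the proof is uniform, resting entirely on the observation that every cross-term arising from the interaction of the top and bottom parts of $\alpha_0$ is a defining binomial of $I_2(M)$ and hence vanishes in $R$. No machinery beyond the explicit forms of $\alpha_0$, $\Phi_d$, $\varphi_0$, and the relations in $R$ is required.
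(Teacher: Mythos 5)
Your proof is correct and follows essentially the same plan as the paper's: verify the augmentation square $(\del_{I_1,0}\oplus\del_{I_2,0})\alpha_0 = \left[\begin{smallmatrix}1\\-1\end{smallmatrix}\right]\del_{J,0}$, the degree-one square $\alpha_0\del_{J,1}=(\del_{I_1,1}\oplus\del_{I_2,1})\alpha_1$, and note that the higher squares are immediate from the block-diagonal splitting $\del_{J,i}=\varphi_{i-1}^{\oplus(m-1)}\oplus\varphi_{i-1}^{\oplus(p-1)}=\del_{I_1,i}\oplus\del_{I_2,i}$. Your treatment of the degree-one identity is in fact more complete than the paper's, which asserts $\alpha_0\Phi_{n-1}=x_{m+1}\Phi_m\oplus(-x_m)\Phi_p$ without explaining why the off-block contributions disappear; your observation that every such cross-term is a $2\times 2$ minor of $M$, hence a defining binomial of $I_2(M)$ that vanishes in $R$, is exactly the justification the paper leaves implicit.
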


\begin{proof}	
	We first check that $\begin{bmatrix*}[r] 1 \\
          -1 \end{bmatrix*} \del_{J,0} = (\del_{I_1,0} \oplus
        \del_{I_2,0}) \alpha_0.$ We compute both sides explicitly:  
	\[
	\begin{aligned}
	\begin{bmatrix*}[r] 1 \\ -1 \end{bmatrix*} \del_{J,0}
        &=  \begin{bmatrix*}[r] 1 \\
          -1 \end{bmatrix*} \begin{bmatrix*} 
	x_1x_{m+1} & x_2x_{m+1} & \ldots& x_mx_{m+1} & x_mx_{m+2} &
        x_mx_{m+3} & \ldots & x_mx_n 
	\end{bmatrix*} \\ 
	&= \begin{bmatrix*}
	x_1x_{m+1} & x_2x_{m+1} & \ldots& x_mx_{m+1} & x_mx_{m+2} & x_mx_{m+3} & \ldots & x_mx_n \\
	-x_1x_{m+1} & -x_2x_{m+1} & \ldots& -x_mx_{m+1} & -x_mx_{m+2} & -x_mx_{m+3} & \ldots & -x_mx_n
	\end{bmatrix*} 
	\end{aligned}
	\]
	and 
	\[ 
	\begin{array}{ll}
	& (\del_{I_1,0} \oplus \del_{I_2,0}) \alpha_0 = \\ 
	&= \begin{bmatrix*}[r]
	x_1 & x_2 & \ldots & x_m & 0 & 0 & \ldots & 0 \\ 
	0 & 0 & \ldots & 0 & x_{m+1} & x_{m+2} & \ldots & x_n
	\end{bmatrix*} \left[ \begin{array}{cc}
	\begin{matrix} 
	x_{m+1} & 0 & \ldots & 0 & \multicolumn{3}{c}{\multirow{4}{*}{$\mathbf{0}$ }} \\ 
	0 & x_{m+1} & \ldots & 0 \\ 
	0 & \ddots & \ddots & 0 \\ 
	0 & 0 & \ldots & x_{m+1} &
	x_{m+2} & x_{m+3} & \ldots & x_n 
	\end{matrix}\\  
	\hline
	\begin{matrix} 
	-x_1 & -x_2 & \ldots & -x_{m} & 0 & 0 & \cdots & 0 \\ 
	\multicolumn{4}{c}{\multirow{3}{*}{$\mathbf{0}$ }} & -x_m & 0 & \cdots & 0 \\ 
	& & & & 0 & -x_m & \cdots & 0 \\ 
	& & & & 0 & \ddots & \ddots & 0 \\ 
	& & & & 0 & 0 & \cdots & -x_m
	\end{matrix}
	\end{array} \right] \\ 
	&= \begin{bmatrix*}
	x_1x_{m+1} & x_2x_{m+1} & \ldots& x_mx_{m+1} & x_mx_{m+2} & x_mx_{m+3} & \ldots & x_mx_n \\
	-x_1x_{m+1} & -x_2x_{m+1} & \ldots& -x_mx_{m+1} & -x_mx_{m+2} & -x_mx_{m+3} & \ldots & -x_mx_n
	\end{bmatrix*}
	\end{array}.
	\]
	
	Now we can check if $\alpha_0 \del_{J,1} = (\del_{I_1,1} \oplus \del_{I_2,1}) \alpha_1$. Without further ado: 
	\[ 
	\begin{array}{ll} 
	\alpha_0 \del_{J,1} &= \\ 
	&= \left[ \begin{array}{cc}
	\begin{matrix} 
	x_{m+1} & 0 & \ldots & 0 & \multicolumn{3}{c}{\multirow{4}{*}{$\mathbf{0}$ }} \\ 
	0 & x_{m+1} & \ldots & 0 \\ 
	0 & \ddots & \ddots & 0 \\ 
	0 & 0 & \ldots & x_{m+1} &
	x_{m+2} & x_{m+3} & \ldots & x_n 
	\end{matrix}\\  
	\hline
	\begin{matrix} 
	-x_1 & -x_2 & \ldots & -x_{m} & 0 & 0 & \cdots & 0 \\ 
	\multicolumn{4}{c}{\multirow{3}{*}{$\mathbf{0}$ }} & -x_m & 0 & \cdots & 0 \\ 
	& & & & 0 & -x_m & \cdots & 0 \\ 
	& & & & 0 & \ddots & \ddots & 0 \\ 
	& & & & 0 & 0 & \cdots & -x_m
	\end{matrix}
	\end{array} \right] \Phi_{n-1} \\ 
	&= x_{m+1}\Phi_{m} \oplus (-x_{m})\Phi_{p}
	\end{array}
	\] 
	where $x_{m+1}\varphi_0$ appears $m-1$ times and $-x_m \varphi_0$ appears $p-1$ times. However, because of the diagonal structure of $\alpha_1$, this is clearly the same as $(\Phi_m \oplus \Phi_{p}) \alpha_1 = (\del_{I_1,1} \oplus \del_{I_2,1}) \alpha_1$. 
	
	For remaining $i \geq 2$, $\del_{J,i} = \del_{I_1,i} \oplus \del_{I_2,i}$, and all the $\alpha_i$ are diagonal matrices, so the products are easily verified to be equal.
\end{proof}

\begin{proof}[Proof of Theorem~\ref{thm:resolution}]
Since $\calF_\bullet(J)$ is a resolution of $J$ and
$\calF_\bullet(I_1)\oplus\calF_\bullet(I_2)$ resolves $I_1\oplus I_2$,
the mapping cone of $\alpha: \calF_\bullet(J)\to
\calF_\bullet(I_1)\oplus\calF_\bullet(I_2)$ is a resolution of
$\frakm$. Augmenting the resolution to be a resolution of $\bbk =
R/\frakm$ results in a shift of one step, and so we finally have the
resolution $\calF_\bullet$. Comparing the rank of the free 
modules in each step to the Betti numbers computed in
Theorem~\ref{bettimain}, we conclude that $\calF_\bullet$ is not only
exact, but minimal.  
\end{proof}

\bibliographystyle{plain}
\bibliography{biblio} 

\end{document}